\definecolor{trp}{rgb}{1,1,1}
\definecolor{red}{rgb}{1,0,.2}
\newtheorem{theorem}{Theorem}[section]
\theoremstyle{plain}
\newtheorem{claim}[theorem]{Claim}
\newtheorem{definition}[theorem]{Definition}
\newtheorem{lemma}[theorem]{Lemma}
\newtheorem{prop}[theorem]{Proposition}
\newtheorem{remark}[theorem]{Remark}
\numberwithin{equation}{section}
\newcommand{\R}{\mathbb{R}}
\newcommand{\ii}{\mathbf{i}}
\newcommand{\jj}{\mathbf{j}}
\newcommand{\iih}{\boldsymbol{\hat\imath}}
\newcommand{\ih}{\hat\imath}
\newcommand{\jh}{\hat\jmath}
\newcommand{\iiv}{\overline{\imath}}
\definecolor{blue}{rgb}{0,0,1}
\definecolor{red}{rgb}{1,0,.7}
\begin{document}
\title[Intermediate dimensions of Bedford--McMullen carpets]{On the intermediate dimensions of \\ Bedford--McMullen carpets}

\author{Istv\'an Kolossv\'ary}
\address{Istv\'an Kolossv\'ary, \newline University of St Andrews,  School of Mathematics and Statistics, \newline St Andrews, KY16 9SS, Scotland} \email{itk1@st-andrews.ac.uk}

\thanks{2020 {\em Mathematics Subject Classification.} Primary 28A80 Secondary 28A78 37C45
\\ \indent
{\em Key words and phrases.} intermediate dimensions, Bedford--McMullen carpet, Hausdorff dimension, box dimension}

\begin{abstract}
The intermediate dimensions of a set $\Lambda$, elsewhere denoted by $\dim_{\theta}\Lambda$, interpolates between its Hausdorff and box dimensions using the parameter $\theta\in[0,1]$. Determining a precise formula for $\dim_{\theta}\Lambda$ is particularly challenging when $\Lambda$ is a Bedford--McMullen carpet with distinct Hausdorff and box dimension. In this direction, answering a question of Fraser, we show that $\dim_{\theta}\Lambda$ is strictly less than the box dimension of $\Lambda$ for every $\theta<1$, moreover, the derivative of the upper bound is strictly positive at $\theta=1$. We also improve on the lower bound obtained by Falconer, Fraser and Kempton.  
\end{abstract}

\maketitle

\thispagestyle{empty}
\section{Introduction and main results}\label{sec:01}

In fractal geometry, perhaps the most studied notions of dimension of a subset $F$ of $\R^d$ are its Hausdorff and box dimensions. Both quantities can be formulated by means of covers of the set $F$. A finite or countable collection of sets $\{U_i\}$ is a \emph{cover} of $F$ if $F\subseteq \bigcup_i U_i.$ Throughout, the diameter of a set $F$ is denoted by $|F|$.

The Hausdorff dimension of $F$ is
\begin{equation*}
\dim_{\mathrm H}F = \inf \big\{s\geq 0:\, \text{for all } \varepsilon>0 \text{ there exists a cover } \{U_i\} \text{ of } F \text{ such that } {\textstyle\sum}|U_i|^s\leq \varepsilon\big\},
\end{equation*}
see \cite[Section 3.2]{FalconerBook}, while the (lower) box dimension is
\begin{multline*}
\underline{\dim}_{\mathrm B}F = \inf \big\{s\geq 0:\, \text{for all } \varepsilon>0 \text{ there exists a cover } \{U_i\} \text{ of } F \text{ such that } \\ |U_i|=|U_j| \text{ for all } i,j \text{ and }  {\textstyle\sum} |U_i|^s\leq \varepsilon\big\},
\end{multline*}
see \cite[Chapter 2]{FalconerBook}. We commonly refer to the quantity ${\textstyle\sum} |U_i|^s$ as the \emph{cost} of the cover.

In other words, the Hausdorff dimension is the smallest possible exponent $s$ such that we can find an \emph{optimal covering strategy} of $F$ in the sense that the cost of these covers can be made arbitrarily small with \textbf{no} restrictions on the diameters of the covering sets. On the other hand, the box dimension gives the exponent when we restrict to coverings with sets of the \textbf{same} diameter.

In particular, if $\dim_{\mathrm H}F=\underline{\dim}_{\mathrm B}F$, then $F$ has an optimal covering strategy where each covering contains sets with equal diameter. However, if  $\dim_{\mathrm H}F<\underline{\dim}_{\mathrm B}F$, then it is natural to ask what different diameters are used in an optimal covering strategy for $\dim_{\mathrm H}F$? The discussion above suggests a way to interpolate between $\dim_{\mathrm H}F$ and $\underline{\dim}_{\mathrm B}F$.

Falconer, Fraser and Kempton~\cite{FFK2019} introduced a continuum of \emph{intermediate dimensions} that achieve this interpolation by imposing increasing restrictions on the relative sizes of covering sets governed by a parameter $0\leq \theta\leq 1$. The Hausdorff and box dimension are the two extreme cases when $\theta=0$ and $1$, respectively.

\begin{definition}\label{def:01}
For $0\leq \theta\leq 1$, the \emph{lower $\theta$-intermediate dimension} of a bounded set $F\subset \R^d$ is defined by
\begin{multline*}
\underline{\dim}_{\theta}F = \inf \{s\geq 0:\, \text{for all } \varepsilon>0 \text{ and all } \delta_0>0, \text{ there exists } 0<\delta\leq \delta_0 \\
\text{ and a cover } \{U_i\} \text{ of } F \text{ such that } \delta^{1/\theta}\leq |U_i| \leq \delta \text{ and } {\textstyle\sum} |U_i|^s\leq \varepsilon\},
\end{multline*}
while its \emph{upper $\theta$-intermediate dimension} is given by
\begin{multline}\label{eq:100}
\overline{\dim}_{\theta}F = \inf \{s\geq 0:\, \text{for all } \varepsilon>0 \text{ there exists } \delta_0>0 \text{ such that for all } 0<\delta\leq \delta_0, \\
\text{ there is a cover } \{U_i\} \text{ of } F \text{ such that } \delta^{1/\theta}\leq |U_i| \leq \delta \text{ and }  {\textstyle\sum} |U_i|^s\leq \varepsilon\}.
\end{multline}
For a given $\theta$, if the values of $\underline{\dim}_{\theta}F$ and $\overline{\dim}_{\theta}F$ coincide, then the common value is called the \emph{$\theta$-intermediate dimension} and is denoted by  $\dim_{\theta}F$.
\end{definition}
Thus, the restriction is to only consider covering sets with diameter in the range $[\delta^{1/\theta},\delta]$. As $\theta\to 0$, the $\theta$-intermediate dimension gives more insight into which scales are used in the optimal cover to reach the Hausdorff dimension. Intermediate dimensions can also be formulated using capacity theoretic methods and may be used to relate the box dimensions of the projections of a set to the Hausdorff dimension of the set, see~\cite{burrell2020dimensions, BurrellFF2019projection}. A similar concept of dimension interpolation between the upper box dimension and the (quasi-)Assouad dimension, called the \emph{Assouad spectrum} was initiated in~\cite{FraserYu2018AssouadSpectrum}. We refer the reader to the recent surveys~\cite{falconer2020intermedSurvey,Fraser2020Survey} for additional references in the topic of dimension interpolation. 

For $\theta<1$, a natural covering strategy to improve on the exponent given by the box dimension is to use covering sets with diameter of the two permissible extremes, i.e. either $\delta^{1/\theta}$ or $\delta$. In examples where an explicit formula is known for the intermediate dimensions, it turns out that this strategy is already optimal. This is the case for elliptical polynomial spirals~\cite{burrellFF2020SpiralsarXiv} and also for the family of countable convergent sequences~\cite{FFK2019}
\begin{equation*}
F_p=\Big\{0,\frac{1}{1^p},\frac{1}{2^p},\frac{1}{3^p},\ldots\Big\}, \text{ where } p>0.
\end{equation*}
Another large, well-known class of sets with distinct Hausdorff and box dimension are self-affine planar carpets. Already in the simplest case of Bedford--McMullen carpets, obtaining a precise formula for the intermediate dimensions seems to be a very challenging problem~\cite{FFK2019,Fraser2020Survey}. The current bounds are rather crude and far apart, in particular, the upper bound improves on the trivial bound of the box dimension only for very small values of $\theta$.

\subsection*{Main contribution}
By properly adapting the strategy of using the two extreme scales $\delta^{1/\theta}$ and $\delta$, we show that the upper intermediate dimension of a Bedford--McMullen carpet (provided it has distinct Hausdorff and box dimension) is strictly smaller than its box dimension for every $\theta<1$. This answers a question of Fraser~\cite[Question~2.1]{Fraser2020Survey}. However, in contrast to previous examples, further arguments suggest that this is not an optimal covering strategy, but rather an increasing number of scales are needed as $\theta\to 0$. Examples also indicate that the $\theta$-intermediate dimension is neither concave nor convex for the whole range of $\theta\in[0,1]$ which is also a new behaviour, see Figure~\ref{fig:DimPlotSeries}.

\subsection*{Bedford--McMullen carpets}

Independently of each other, Bedford~\cite{Bedford84_phd} and McMullen~\cite{ mcmullen84} were the first to study non-self-similar planar carpets. They split $R=[0,1]^2$ into $m$ columns of equal width and $n$ rows of equal height for some integers $n>m\geq 2$ and considered  orientation preserving maps of the form
\begin{equation*}
f_{(i,j)}(\underline{x}):= \begin{pmatrix} 1/m & 0 \\ 0 & 1/n \end{pmatrix} \begin{pmatrix} x \\ y \end{pmatrix} + \begin{pmatrix} i/m \\ j/n
\end{pmatrix}
\end{equation*}
for the index set $(i,j)\in \mathcal{A}\subseteq \{0,\ldots,m-1\}\times\{0,\ldots,n-1\}$. It is well-known that associated to the iterated function system (IFS) $\mathcal{F}=\{f_{(i,j)}\}_{(i,j)\in \mathcal{A}}$ there exists a unique non-empty compact subset $\Lambda_{\mathcal{F}}=\Lambda$ of $R$, called the attractor, such that
\begin{equation*}
\Lambda = \bigcup_{(i,j)\in \mathcal{A}} f_{(i,j)} ( \Lambda).
\end{equation*}
We call $\Lambda$ a \emph{Bedford--McMullen carpet} and refer the interested reader to the recent survey~\cite{fraser_BMcarpetSurvey_20arxiv} for further references. Figure~\ref{fig:BMPicture} shows the simplest possible example for a Beford-McMullen carpet with distinct Hausdorff and box-dimension.

\begin{figure}[H]
	\centering
	\includegraphics[width=0.85\textwidth]{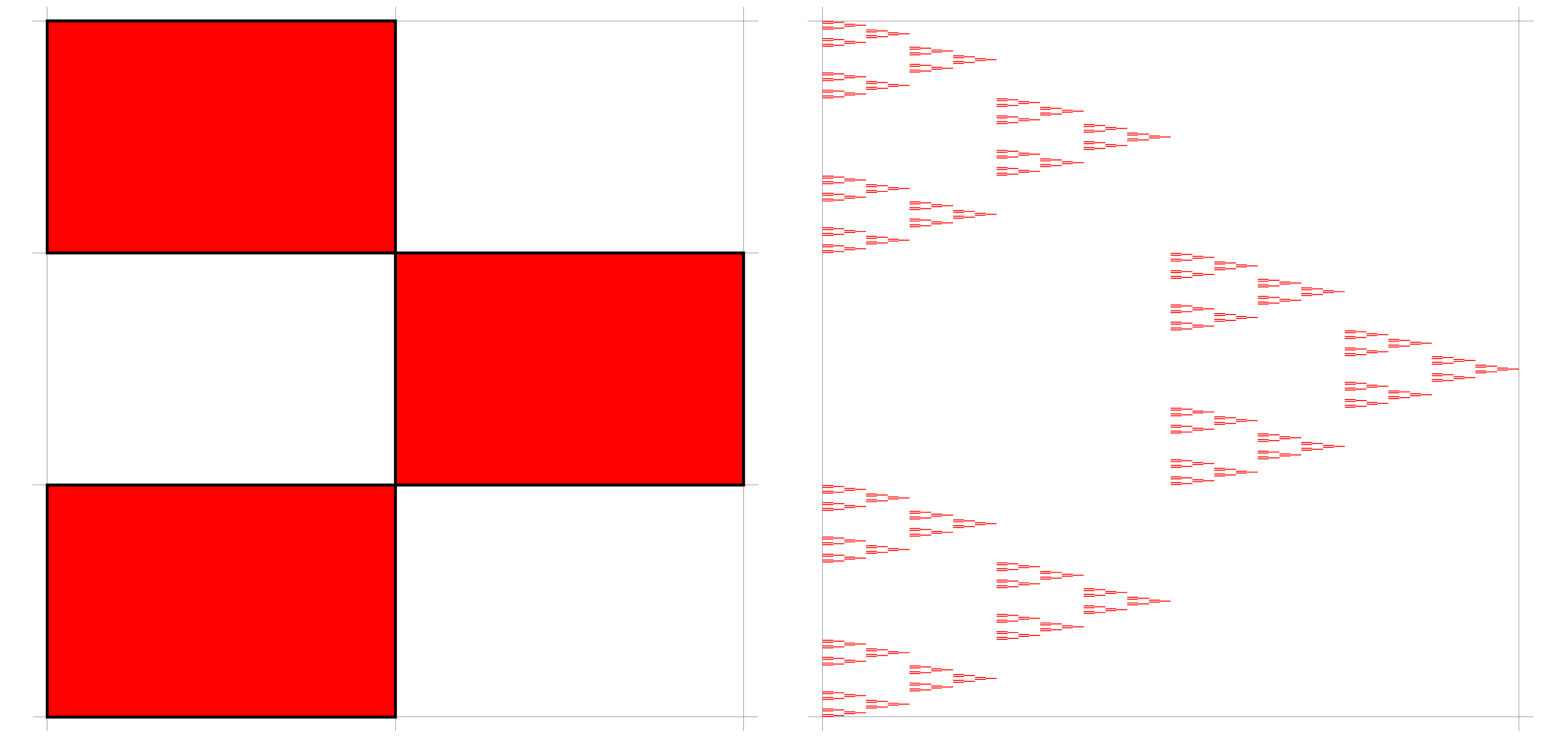}
	\caption{A Bedford-McMullen carpet with non-uniform vertical fibres. Left: the images of $[0,1]^2$ under the maps of $\mathcal{F}$. Right: the attractor $\Lambda$.}
	\label{fig:BMPicture}
\end{figure}

\subsubsection*{Notation}
Let $\Lambda$ be the Bedford--McMullen carpet associated to the IFS $\mathcal{F}$. For the remainder of the paper, we index the maps of $\mathcal{F}$ by $i\in\{1,\ldots,N\}$. We frequently use the abbreviation $[N]:=\{1,\ldots,N\}$. We can partition $[N]$ into $1<M\leq m$ sets $\mathcal{I}_1, \dots ,\mathcal{I}_M$ with cardinality $\#\mathcal{I}_{\jh}=N_{\jh}>0$ so that
\begin{equation*}
\mathcal{I}_1=\{1,\ldots,N_1\} \text{ and }\mathcal{I}_{\jh} = \{N_1+\ldots+N_{\jh-1}+1,\ldots,N_1+\ldots+N_{\jh}\} 
\end{equation*}
for $\jh=2,\ldots,M$. Moreover, this partition satisfies that
\begin{equation}\label{eq:101}
i\in\mathcal{I}_{\jh} \;\Longleftrightarrow\; f_i \text{ maps } R \text{ to the } \jh\,\text{-th non-empty column}.
\end{equation}
Formally, to keep track of this, we use the function
\begin{equation*}
\phi: \{1,2,\ldots,N\}\to\{1,2,\ldots,M\},\;\; \phi(i):=\jh\; \text{ if } i\in\mathcal{I}_{\jh}.
\end{equation*}
Throughout, $i$ is an index from $[N]$, while $\jh$ with the hat is an index corresponding to a column from $[M]:=\{1,\ldots,M\}$, see Section~\ref{sec:20} for details on symbolic notation. In Figure~\ref{fig:BMPicture} we have $N=3, M=2, \mathcal{I}_1=\{1,2\}, \mathcal{I}_2=\{3\}, \phi(1)=\phi(2)=1$ and $\phi(3)=2$. Let
\begin{equation*}
\mathbf{N}^{M}:= (N_1,\ldots, N_M) \;\text{ and }\;  \mathbf{N}:=(N_{\phi(1)},\ldots,N_{\phi(N)}) = (\overbrace{N_1,\ldots,N_1}^{N_1 \text{ times}},\ldots,\overbrace{N_M,\ldots,N_M}^{N_M \text{ times}}).
\end{equation*}
The uniform probability vector on $[N]$ and $[M]$ is denoted by
\begin{equation*}
\widetilde{\mathbf{p}} := (1/N,\ldots,1/N) \;\text{ and }\; \widetilde{\mathbf{q}}^M := (1/M,\ldots,1/M).
\end{equation*}
If we distribute $\widetilde{\mathbf{q}}^M$ uniformly within columns, we get on $[N]$ the \emph{coordinate uniform vector}
\begin{equation*}
\widetilde{\mathbf{q}}/\mathbf{N} = (1/(MN_{\phi(1)}), \ldots ,1/(MN_{\phi(N)}) ),
\end{equation*}
introduced in~\cite{FraserHowroyd_AnnAcadSciFennMath17}. The entropy of a probability vector $\mathbf{p}$ is
\begin{equation*}
H(\mathbf{p}) = -\sum_i p_i\log p_i.
\end{equation*}
In particular, $H(\widetilde{\mathbf{p}})=\log N$ and $H(\widetilde{\mathbf{q}}^M)=\log M$.
\vspace{4mm}

We say that $\Lambda$ has \emph{uniform vertical fibres} if and only if $\mathbf{N}^M= (N/M,\ldots,N/M)$, i.e. each non-empty column has the same number of maps.
Bedford and McMullen showed that the Hausdorff dimension of $\Lambda$ is equal to  
\begin{equation}\label{eq:102}
\dim_{\mathrm H}\Lambda=\frac{H(\widehat{\mathbf{p}})}{\log n} + \left( 1- \frac{\log m}{\log n}\right) \frac{H(\widehat{\mathbf{q}}^M)}{\log m},
\end{equation}
where $\widehat{\mathbf{p}}=(\widehat{p}_1,\ldots,\widehat{p}_N)$ and $\widehat{\mathbf{q}}^M=(\widehat{q}_1^M,\ldots \widehat{q}_M^M)$ are equal to
\begin{equation*}
\widehat{p}_k= N_{\ih}^{\frac{\log m}{\log n}-1} \cdot \Big(\sum_{\jh=1}^M N_{\jh}^{\frac{\log m}{\log n}}\Big)^{-1} \;\text{ and }\; \widehat{q}_{\ih}^M=N_{\ih}\cdot \widehat{p}_k \;\text{ if } k\in \mathcal{I}_{\ih}.
\end{equation*}
Similarly to $\widetilde{\mathbf{q}}/\mathbf{N}$, we can also distribute $\widehat{\mathbf{q}}^M$ evenly within columns to get a probability vector $\widehat{\mathbf{q}}/\mathbf{N}$ on $[N]$, but observe from the definition of $\widehat{\mathbf{q}}^M$ that this is simply the vector~$\widehat{\mathbf{p}}$. Bedford and McMullen also showed a similar formula for the box dimension 
\begin{equation}\label{eq:103}
\dim_{\mathrm B}\Lambda = \frac{H(\widetilde{\mathbf{p}})}{\log n} + \left(1-\frac{\log m}{\log n}\right)\frac{H(\widetilde{\mathbf{q}}^M)}{\log m}.  
\end{equation}
In particular, $\dim_{\mathrm H}\Lambda = \dim_{\mathrm B}\Lambda$ if and only if $\Lambda$ has uniform vertical fibres. Thus,
\begin{equation*}
\text{we always assume that } \Lambda \text{ has {\bf non}-uniform vertical fibres}.
\end{equation*}
We remark that \eqref{eq:102} and \eqref{eq:103} is not the usual way of writing the formula for $\dim_{\mathrm H}\Lambda$ and $\dim_{\mathrm B}\Lambda$, but it will give a rather natural way to interpolate between the two values. 

\subsection*{Results illustrated with an example}

Before any formal statements, let us compare the results of Falconer--Fraser--Kepmton~\cite{FFK2019} with our contributions. The authors of~\cite{FFK2019} proved that for any non-empty bounded set $F\subset\R^d$ the functions $\theta\mapsto\underline{\dim}_{\theta}F$ and $\theta\mapsto\overline{\dim}_{\theta}F$ are continuous for $\theta\in(0,1]$. In particular, for Bedford--McMullen carpets they gave an upper bound for $\overline{\dim}_{\theta}\Lambda$ which implies continuity also at $\theta=0$, but only improves on the trivial upper bound of $\dim_{\mathrm B}\Lambda$ for very-very small values of $\theta$. They also give a linear lower bound which shows that $\underline{\dim}_{\theta}\Lambda>\dim_{\mathrm H}\Lambda$ for every $\theta\in(0,1]$, and moreover, a general lower bound which reaches $\dim_{\mathrm B}\Lambda$ at $\theta=1$.

In contrast, our results concentrate more on larger values of $\theta$. Most notably, the new feature we show is that $\overline{\dim}_{\theta}\Lambda<\dim_{\mathrm B}\Lambda$ for every $\theta\in [0,1),$ where $\Lambda$ is a Bedford--McMullen carpet with non-uniform vertical fibres. The upper bound has a strictly positive derivative at $\theta=1$. We also give a lower bound for $\underline{\dim}_{\theta}\Lambda$ that genuinely interpolates between $\dim_{\mathrm H}\Lambda$ and $\dim_{\mathrm B}\Lambda$, which improves on the bound of~\cite{FFK2019} for $\theta$ greater than some specific $\theta^\ast>0$  depending on the carpet $\Lambda$. 

For illustration, Figure~\ref{fig:DimPlots} shows the bounds obtained in~\cite{FFK2019} and this paper in the example of Figure~\ref{fig:BMPicture}. The upper bound of~\cite{FFK2019} improves on $\dim_{\mathrm B}\Lambda$ only for $\theta\lessapprox 10^{-13}$. We remark that the (blue) plot depicting the lower bound from~\cite{FFK2019} is not precisely the one claimed there, see Section~\ref{sec:41} for an explanation. 

\begin{figure}[H]
	\centering
	\includegraphics[width=0.85\textwidth]{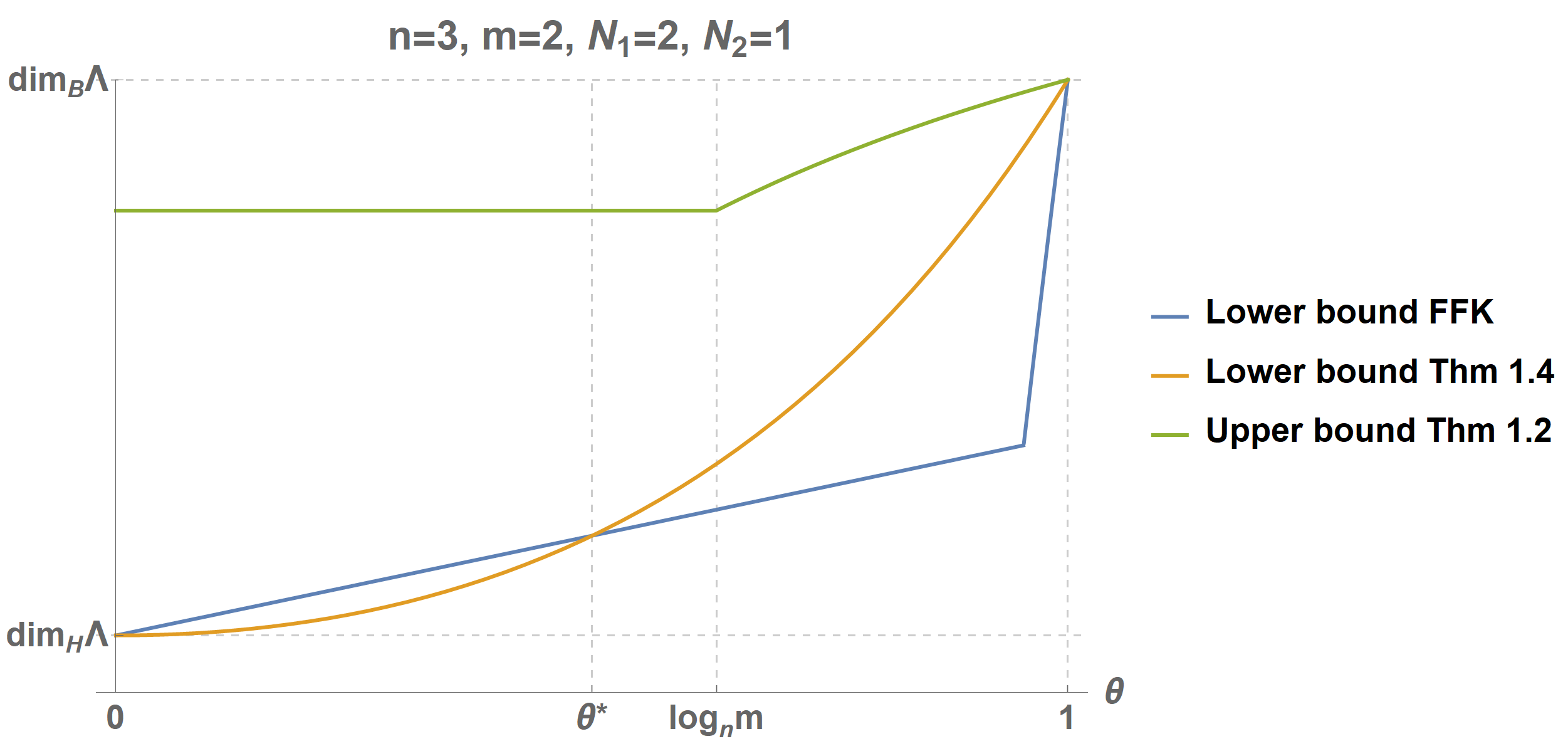}
	\caption{Bounds for the intermediate dimension of the carpet in Figure~\ref{fig:BMPicture}.}
	\label{fig:DimPlots}
\end{figure} 
\subsection{Formal statements}
Now we state our main results.

\subsubsection*{Upper bound}

The concavity of the logarithm function and non-uniform vertical fibres implies that
\begin{equation}\label{eq:107}
\overline{\log \mathbf{N}^M} := \frac{1}{M} \sum_{\jh=1}^{M} \log N_{\jh} < \log(N/M).
\end{equation}
Let $X$ denote a uniformly distributed random variable on the set $\{\log N_1,\ldots,\log N_M\}$. Then $\overline{\log \mathbf{N}^M}$ is the expected value of $X$. The large deviation rate function of $X$ is
\begin{equation}\label{eq:108}
I(x)=\sup_{\lambda\in \R} \bigg\{\lambda x - \log\bigg( \frac{1}{M} \sum_{\jh=1}^{M} N_{\jh}^{\lambda}\bigg)\bigg\}.
\end{equation} 
It is a convex function and for $x\geq \overline{\log \mathbf{N}^M}$ it is non-decreasing, it is enough to take the supremum over $\lambda\geq0$, moreover,  $I\big(\overline{\log \mathbf{N}^M}\big)=0$ \cite[Lemma 2.2.5]{DemboZeitouniLDP}.

\begin{theorem}\label{thm:00}
Let $\Lambda$ be a Bedford--McMullen carpet with non-uniform vertical fibres. Then for every $\theta\in[\log_n m,1)$ we have that
\begin{equation*}
\overline{\dim}_{\theta}\Lambda \leq \dim_{\mathrm B}\Lambda - \frac{\Delta_0(\theta)}{\log n}(1-\theta) <\dim_{\mathrm B}\Lambda,
\end{equation*}	
where $\Delta_0(\theta)\in\big(0,\log(N/M)-\overline{\log \mathbf{N}^M}\big)$ is the unique solution of 
\begin{equation}\label{eq:106}
(1-\theta)\cdot \Delta_0(\theta) = \bigg(\frac{1}{\theta}-1\bigg)\cdot I\big(\log(N/M)-\Delta_0(\theta) \big).
\end{equation}
In particular, the derivative of the upper bound remains strictly positive as $\theta\to1$.

Moreover, since $\overline{\dim}_{\theta}\Lambda$ is non-decreasing, for every $\theta\in[0,\log_n m)$
\begin{equation*}
\overline{\dim}_{\theta}\Lambda \leq \dim_{\mathrm B}\Lambda -  \frac{\Delta_0(\log_n m)}{\log n}\big(1-\log_n m\big) <\dim_{\mathrm B}\Lambda.
\end{equation*}
\end{theorem}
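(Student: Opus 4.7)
The plan is to build, for $\delta = n^{-k}$ with $k$ large, an explicit cover of $\Lambda$ using only the two extreme diameters $\delta$ and $\delta^{1/\theta}$, and then estimate its $s$-cost. Write $k' = \lceil k/\theta \rceil$, $\theta_0 = \log m/\log n$, $\ell = \lceil k/\theta_0 \rceil$ and $\ell' = \lceil k'/\theta_0 \rceil$, so that level-$(k,\ell)$ and level-$(k',\ell')$ rectangles are the usual approximate squares at the two scales. The assumption $\theta \geq \log_n m$ is precisely what forces $k' \leq \ell$, which keeps the bookkeeping clean: the level-$k'$ sub-squares of a level-$k$ approximate square $Q = Q(\mathbf{i},\hat{\mathbf{j}})$ (indexed by $\mathbf{i} \in [N]^k$ and column extension $\hat{\mathbf{j}} = (\hat\jmath_{k+1},\ldots,\hat\jmath_\ell) \in [M]^{\ell-k}$) are parametrised purely by the first $K := k'-k$ entries of $\hat{\mathbf{j}}$.

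Fix a threshold $c$ and, for each such $Q$, define the empirical column average $x_*(Q) := \frac{1}{K}\sum_{j=k+1}^{k'}\log N_{\hat\jmath_j}$. The two-scale strategy I would adopt is: if $x_*(Q) \geq c$, use $Q$ itself as a cover set of diameter $\delta$; if $x_*(Q) < c$, replace $Q$ by its $\prod_{j=k+1}^{k'} N_{\hat\jmath_j}\cdot M^{\ell'-\ell}$ level-$k'$ sub-squares of diameter $\delta^{1/\theta}$. The resulting family is a valid cover of $\Lambda$ with all diameters in $[\delta^{1/\theta}, \delta]$, hence its $s$-cost is admissible in the infimum defining $\overline{\dim}_\theta \Lambda$.

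The cost splits naturally as $C_1 + C_2$ (unsplit plus split). Grouping squares by $(\hat\jmath_{k+1},\ldots,\hat\jmath_{k'}) \in [M]^K$ and writing $\eta := \dim_{\mathrm B}\Lambda - s$, Cram\'er's inequality $\#\{x_*(\cdot)\geq c\} \leq M^K e^{-KI(c)}$ (meaningful exactly when $c > \overline{\log \mathbf{N}^M}$) combines with the Bedford--McMullen identity $N^k M^{\ell-k}n^{-ks_0}=1$ to give $C_1 \leq n^{k\eta}e^{-KI(c)}$. For $C_2$ the crucial trick is the crude estimate
\[
\sum_{\hat{\mathbf{j}}:\, x_*(\hat{\mathbf{j}})<c}\,\prod_{j=k+1}^{k'} N_{\hat\jmath_j} \;\leq\; M^K e^{Kc},
\]
obtained by bounding each product by its maximum under the constraint $x_*<c$; combined with the combinatorial identity $N^k M^{\ell'-k}n^{-k's} = \exp(-K\log(N/M)+k\eta\log n/\theta)$ this gives $C_2 \leq \exp\bigl(-K(\log(N/M)-c)+k\eta\log n/\theta\bigr)$. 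Demanding $C_1, C_2 \to 0$ as $k\to\infty$ yields the two constraints
\[
\eta\log n \leq \tfrac{1-\theta}{\theta}\,I(c) \qquad \text{and}\qquad \eta\log n \leq (1-\theta)\bigl(\log(N/M)-c\bigr),
\]
and equating the right-hand sides with $\Delta_0 := \log(N/M)-c$ produces exactly the defining equation $(1-\theta)\Delta_0 = (1/\theta - 1)I(\log(N/M)-\Delta_0)$, with maximal reduction $\eta = (1-\theta)\Delta_0/\log n$.

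The main technical point will be the existence and uniqueness of $\Delta_0 \in (0, \log(N/M) - \overline{\log \mathbf{N}^M})$: rewriting the defining equation in the equivalent form $\theta\Delta_0 = I(\log(N/M)-\Delta_0)$, this is an intermediate-value argument, since $\Delta \mapsto I(\log(N/M)-\Delta)$ is continuous and strictly decreasing from $I(\log(N/M))>0$ at $\Delta=0$ down to $I(\overline{\log \mathbf{N}^M})=0$ at $\Delta = \log(N/M)-\overline{\log \mathbf{N}^M}$, while $\theta\Delta$ increases linearly from $0$. The strict positivity of the derivative of the upper bound at $\theta=1$ follows because the limit equation $\Delta_0 = I(\log(N/M)-\Delta_0)$ still has a strictly positive solution, and the extension of the bound to $\theta\in[0,\log_n m)$ is immediate from the monotonicity of $\theta\mapsto\overline{\dim}_\theta\Lambda$. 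Passing from the discrete scales $\delta=n^{-k}$ to arbitrary $\delta\in(0,\delta_0]$ is standard interpolation between consecutive integer levels, costing only a multiplicative constant absorbed into the $\varepsilon$ of the definition.
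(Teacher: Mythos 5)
Your proposal is correct and follows essentially the same approach as the paper: you build the two-scale cover from level-$k$ approximate squares, splitting exactly those whose empirical column average falls below a threshold $c$, bound the unsplit count by a Chernoff/Cram\'er large-deviation estimate and the split cost by the crude pointwise bound $\prod N_{\hat\jmath_j} \leq e^{Kc}$, and then optimize the threshold by equating the two resulting exponent constraints, which is precisely how the paper derives the defining equation for $\Delta_0(\theta)$. The only cosmetic differences are the choice of indexing ($\delta = n^{-k}$ with cylinder level $k$ rather than $\delta = m^{-K}$ with column level $K$) and that you invoke the one-sided Chernoff inequality where the paper states full Cram\'er asymptotics; both suffice for the upper bound.
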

\begin{remark}
In explicit examples, like in Figure~\ref{fig:BMPicture}, $\Delta_0(\theta)$ can be numerically calculated.
\end{remark}
Loosely speaking, the upper bound is obtained by constructing a cover of $\Lambda$ using the two extreme scales $\delta$ and $\delta^{1/\theta}$. The cost of each part of the cover is upper bounded so that, with a properly chosen exponent $s$, it can be made arbitrarily small. Condition~\eqref{eq:106} defining $\Delta_0(\theta)$ ensures that the order of magnitude of the two parts of the cover are equal. This, however,  is \textbf{not} an optimal covering strategy, see Proposition~\ref{prop1} in Section~\ref{sec:improvements} for a cover which uses three scales in the cover and results in a better upper bound for $\overline{\dim}_{\theta}\Lambda$.

\subsubsection*{Lower bound}
For a fixed parameter $t>0$ let
\begin{equation}\label{eq:104}
\mathbf{P}^t(\theta) := \theta^t\,\widetilde{\mathbf{p}} +(1-\theta^t)\widehat{\mathbf{p}}\;\text{ and }\; \mathbf{Q}^{M,t}(\theta) := \theta^t\,\widetilde{\mathbf{q}}^M +(1-\theta^t)\widehat{\mathbf{q}}^M.
\end{equation}
Observe that $\mathbf{P}^t(\theta)$ and $\mathbf{Q}^{M,t}(\theta)$ are probability vectors for every $t>0$ and $\theta\in[0,1]$. Based on the formulas \eqref{eq:102} and \eqref{eq:103}, it is natural to introduce
\begin{equation}\label{eq:105}
\dim^t(\theta) := \frac{H(\mathbf{P}^t(\theta))}{\log n} + \left(1-\frac{\log m}{\log n}\right)\frac{H(\mathbf{Q}^{M,t}(\theta))}{\log m}. 
\end{equation}
Then, by definition, $\dim^t(0) = \dim_{\mathrm H}\Lambda$ and $\dim^t(1) = \dim_{\mathrm B}\Lambda$. Let us also define
\begin{equation*}
\psi(t,\theta):= \dim^t(\theta) - (1-\theta) \frac{H(\mathbf{P}^t(\theta))-H(\mathbf{Q}^t(\theta))}{\log n}, 
\end{equation*}
where $\mathbf{Q}^t(\theta) := \theta^t\,\widetilde{\mathbf{q}}/\mathbf{N} +(1-\theta^t)\widehat{\mathbf{p}}$. Then $\mathbf{Q}^t(\theta)$ is also a probability vector, furthermore, since $\mathbf{P}^t(0)=\mathbf{Q}^t(0)=\widehat{\mathbf{p}}$, we have that $\psi(t,0) = \dim_{\mathrm H}\Lambda$ and $\psi(t,1) = \dim_{\mathrm B}\Lambda$ for every $t$.

\begin{theorem}\label{thm:01}
Let $\Lambda$ be a Bedford--McMullen carpet with non-uniform vertical fibres. For every $\theta\in[0,1]$	
\begin{equation*}
\underline{\dim}_{\theta}\Lambda \geq \sup_{t>0}\, \psi(t,\theta). 
\end{equation*}
\end{theorem}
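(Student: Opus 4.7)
Fix $t>0$. The plan is to show that for every $s<\psi(t,\theta)$ there exist $\varepsilon>0$ and $\delta_0>0$ such that every cover $\{U_i\}$ of $\Lambda$ with $\delta^{1/\theta}\leq|U_i|\leq\delta$ and $\delta\leq\delta_0$ satisfies $\sum_i|U_i|^s\geq\varepsilon$; letting $s\nearrow\psi(t,\theta)$ and taking $\sup_{t>0}$ then yields the theorem. The approach is a mass-distribution argument built on a carefully chosen \emph{non-stationary} Bernoulli product measure on the symbolic space $[N]^{\mathbb{N}}$ that captures the two natural extreme scales $\delta$ and $\delta^{1/\theta}$. By the standard reduction for Bedford--McMullen covers (losing only a bounded multiplicative constant in the cost), each $U_i$ may be replaced by an \emph{approximate square} at the symbolic y-level $k_i$ with $n^{-k_i}\asymp|U_i|$. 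Writing $K,L$ for integers with $n^{-K}\asymp\delta$ and $n^{-L}\asymp\delta^{1/\theta}$ (so that $K/L\to\theta$), each $k_i\in[K,L]$, and an approximate square at level $k$ is the union of length-$\ell_k$ cylinders with $\ell_k=\lceil k\log n/\log m\rceil$, having the first $k$ symbols fixed and the remaining $\ell_k-k$ symbols only constrained through the column function $\phi$.

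I then define the measure $\mu$ on $[N]^{\mathbb{N}}$ as the product of level-dependent weight vectors $\mathbf{w}_j$ with $\mathbf{w}_j:=\mathbf{P}^t(\theta)$ for $1\leq j\leq K$ and $\mathbf{w}_j:=\mathbf{Q}^t(\theta)$ for $j>K$. The key point is that $\mathbf{Q}^t(\theta)$ has column marginal exactly $\mathbf{Q}^{M,t}(\theta)$---this is precisely where the definition of $\mathbf{Q}^t$ via the coordinate uniform vector $\widetilde{\mathbf{q}}/\mathbf{N}$ accommodates the non-uniform vertical fibres. A direct computation yields, for an approximate square $S_k$ with $k\in[K,L]$,
\[
\mathbb{E}_\mu\bigl[-\log\mu(S_k)\bigr]=K\,H(\mathbf{P}^t(\theta))+(k-K)H(\mathbf{Q}^t(\theta))+(\ell_k-k)H(\mathbf{Q}^{M,t}(\theta)),
\]
so, upon dividing by $k\log n$ and using $\ell_k/k\to\log n/\log m$, the local exponent of $\mu$ at scale $n^{-k}$ becomes
\[
\frac{K}{k}\,\frac{H(\mathbf{P}^t(\theta))}{\log n}+\Big(1-\frac{K}{k}\Big)\frac{H(\mathbf{Q}^t(\theta))}{\log n}+\Big(1-\frac{\log m}{\log n}\Big)\frac{H(\mathbf{Q}^{M,t}(\theta))}{\log m}.
\]
As $K/k$ runs over $[\theta,1]$ for $k\in[K,L]$, this is a linear function of $K/k$ with slope $(H(\mathbf{P}^t(\theta))-H(\mathbf{Q}^t(\theta)))/\log n\geq 0$ (the entropy inequality being verifiable via the chain-rule decompositions of $H(\mathbf{P}^t)$ and $H(\mathbf{Q}^t)$ across columns combined with Chebyshev's sum inequality), so its minimum is attained at $k=L$, where it equals precisely $\psi(t,\theta)$.

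Finally, a Hoeffding-type concentration estimate for the independent sum $-\log\mu(S_k)$ shows that for each $\eta>0$ and all sufficiently small $\delta$, the $\mu$-mass of approximate squares at any level $k\in[K,L]$ whose log-measure deviates from its expectation by more than $\eta k$ is exponentially small in $k$; summing over $[K,L]$ keeps the total $\mu$-mass of such atypical squares below $1/2$. For every $U_i$ covered by a typical approximate square at level $k_i$ we then deduce $\mu(U_i)\leq C|U_i|^s$ with $s=\psi(t,\theta)-O(\eta)$; intersecting with the typical set $T$ and summing gives $\mu(T)\leq C\sum_i|U_i|^s$, hence $\sum_i|U_i|^s\geq 1/(2C)>0$, completing the argument. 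The two main technical hurdles I anticipate are (i) verifying the entropy inequality $H(\mathbf{P}^t(\theta))\geq H(\mathbf{Q}^t(\theta))$ across the full range of $t,\theta\in(0,1]$---without which the monotonicity in $K/k$ fails and the minimum might undercut $\psi$ at an intermediate scale---and (ii) establishing the uniform concentration over the scale range $[K,L]$, whose length $\sim(1/\theta-1)K$ diverges as $\theta\to 0$.
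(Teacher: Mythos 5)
Your argument is essentially the paper's: a non-stationary Bernoulli product measure that uses $\mathbf{P}^t(\theta)$ before the dividing level and $\mathbf{Q}^t(\theta)$ after, the observation that the local exponent of the measure on an approximate square is affine in $K/k$ with slope $(H(\mathbf{P}^t)-H(\mathbf{Q}^t))/\log n$ and hence minimised at the finest scale $k=L$ where it equals $\psi(t,\theta)$, and the mass-distribution principle of Falconer--Fraser--Kempton; the paper factors this through its Proposition~\ref{prop:21} (valid for any entropy-ordered pair of column-constant vectors) and obtains the uniform bound via the strong law of large numbers plus Egorov's theorem rather than an explicit Hoeffding estimate, but these are presentational choices. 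One caveat on your anticipated hurdle~(i): the column chain rule gives $H(\mathbf{P}^t)-H(\mathbf{Q}^t)=\bigl[H(\mathbf{P}^{M,t})-H(\mathbf{Q}^{M,t})\bigr]+\theta^t\sum_{\ih}(N_{\ih}/N-1/M)\log N_{\ih}$, and while Chebyshev's sum inequality does make the second sum positive, the bracketed term is actually \emph{negative} for $\theta>0$ (since $\mathbf{Q}^{M,t}$ flows toward the max-entropy vector $\widetilde{\mathbf{q}}^{M}$ whereas $\mathbf{P}^{M,t}$ does not), so Chebyshev alone does not settle the inequality — you must still show the positive piece dominates, which is where the strict concavity estimate $\log(N/M)>\overline{\log\mathbf{N}^{M}}$ (equivalently~\eqref{eq:510} at $\theta=1$) enters, as in the paper's Claim in Section~\ref{sec:04}.
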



Loosely speaking, the proof goes by showing that a certain measure defined on $\Lambda$ using $\mathbf{P}^t(\theta)$ and $\mathbf{Q}^t(\theta)$ satisfies a variant of the mass distribution principle obtained in~\cite{FFK2019}.


\subsubsection*{Structure of paper}
Section~\ref{sec:02} introduces additional notation, defines approximate squares, outlines the covering strategy for the upper bound and the general scheme to obtain the lower bound. Section~\ref{sec:03} and \ref{sec:04} contain the proofs of Theorem~\ref{thm:00} and \ref{thm:01}, respectively. In Section~\ref{sec:improvements} we comment on how to improve on the upper bound and raise a number of questions for further research.

\section{Preliminaries}\label{sec:02}

In this section, we collect important notation and outline our strategy for proving the upper and lower bound, respectively.

\subsection{Symbolic notation}\label{sec:20}

Let $\mathcal{F}=\{f_i\}$ be an IFS generating a Bedford--McMullen carpet $\Lambda$. The map $f_i$ is indexed by $i\in\{1,2,\ldots,N\}$. Recall from~\eqref{eq:101} that we partitioned $\{1,2,\ldots,N\}$ into non-empty disjoint index sets $\mathcal{I}_{1},\ldots,\mathcal{I}_{M}$ to indicate which column $f_i$ maps to. To keep track of this, we introduced the function
\begin{equation*}
\phi: \{1,2,\ldots,N\}\to\{1,2,\ldots,M\},\;\; \phi(i):=\ih\; \text{ if } i\in\mathcal{I}_{\ih}.
\end{equation*}
For compositions of maps, we use the standard notation $f_{i_1\ldots i_n}:=f_{i_1}\circ f_{i_2}\circ\ldots \circ f_{i_n}$, where $i_{\ell}\in\{1,2,\ldots,N\}$ and so $f_{i_1\ldots i_n}$ maps to the $n$th level column indexed by the sequence $\phi(i_1)\phi(i_2)\ldots\phi(i_n).$

We define the symbolic spaces
\begin{equation*}
\Sigma=\{1,2,\ldots,N\}^\mathbb N \;\text{ and }\; \Sigma_{\mathcal{H}}=\{1,2,\ldots,M\}^\mathbb N
\end{equation*}
with elements $\ii=i_1i_2\ldots\in\Sigma$ and $\iih=\ih_1\ih_2\ldots\in\Sigma_{\mathcal{H}}$. The function $\phi$ naturally induces the map $\Phi: \Sigma\to\Sigma_{\mathcal{H}}$ defined by
\begin{equation*}
\Phi(\ii):= \iih = \phi(i_1)\phi(i_2)\ldots.
\end{equation*}
Finite words of length $n$ are either denoted with a `bar' like $\iiv=i_1\ldots i_n\in\Sigma_n$ or as a truncation $\ii|n=i_1\ldots i_n$ of an infinite word $\ii$. The length is denoted $|\cdot|$. The set of all finite length words is denoted by $\Sigma^\ast=\bigcup_n\Sigma_n$ and analogously $\Sigma_{\mathcal{H}}^\ast$. The left shift operator on $\Sigma$ and $\Sigma_{\mathcal{H}}$ is $\sigma$, i.e. $\sigma(\ii)=i_2i_3\ldots$ and $\sigma(\iih)=\ih_2\ih_3\ldots$. Slightly abusing notation, $\Phi$ is also defined on finite words: $\Phi(i_1\ldots i_n)=\phi(i_1)\ldots\phi(i_n)$.

The longest common prefix of $\ii$ and $\jj$ is denoted $\ii\wedge\jj$, i.e. its length is $|\ii\wedge\jj|=\min \{k:\; i_k\neq j_k\}-1$. This is also valid if one of them has or both have finite length. The $n$th level cylinder set of $\ii\in\Sigma$ is $[\ii|n] := \{\jj\in\Sigma:\; |\ii\wedge\jj|\geq n\}$. Similarly for $\iiv\in\Sigma_n$ and $\iih\in\Sigma_{\mathcal{H}}$. The $n$th level cylinders corresponding to $\ii$ on the attractor and $[0,1]^2$ are
\begin{equation*}
\Lambda_{n}(\ii) := f_{\ii|n}(\Lambda) \;\text{ and }\; C_{n}(\ii):=f_{\ii|n}([0,1]^2).
\end{equation*}
The sets $\{C_{n}(\ii)\}_{n=1}^\infty$ form a nested sequence of compact sets with diameter tending to zero, hence their intersection is a unique point $x\in\Lambda$. This defines the natural projection $\Pi:\Sigma\to\Lambda$
\begin{equation*}
\Pi(\ii):=\lim_{n\to\infty} \bigcap_{n=1}^\infty C_{n}(\ii)=\lim_{n\to\infty} f_{\ii|n}(\underline 0).
\end{equation*}
In particular, $\Pi([\ii|n])=\Lambda_{n}(\ii)$. The coding of a point $x\in\Lambda$ is not necessarily unique, but $\Pi$ is finite-to-one.

\subsection{Approximate squares}\label{sec:21}

The notion of an `approximate square' is crucial in the study of planar carpets. Essentially, they play the role of balls in a cover of the attractor. Since $m>n$, a cylinder set $C_K(\ii)$ has width $m^{-K}$ exponentially larger than its height~$n^{-K}$.

The correct scales at which to achieve approximately equal width and height is $K$ and 
\begin{equation*}
L(K):= \left\lfloor K \cdot \log_n m\right\rfloor<K.
\end{equation*}
In other words, $L(K)$ is the unique integer such that $m^{-K}\leq n^{-L(K)} < m^{-(K-1)}$. A level $K$ \emph{approximate square} is defined as
\begin{equation*}
B_K(\ii) := \big\{\Pi(\jj):\, |\ii\wedge\jj|\geq L(K) \text{ and } |\Phi(\ii)\wedge\Phi(\jj)|\geq K \big\}.
\end{equation*}
It is essentially a level $K$ column within a level $L(K)$ cylinder set.
\begin{remark}
One can also consider approximate squares to be the balls in the symbolic space $\Sigma$ with metric, say,
\begin{equation*}
d(\ii,\jj):= m^{-|\Phi(\ii)\wedge\Phi(\jj)|} + n^{-|\ii\wedge\jj|}.
\end{equation*}
See \cite[Section 4]{KolSimon_TriagGL2019} in a slightly more general setting.
\end{remark}

The choice of $L(K)$ implies that up to some universal multiplicative constant $C$, the diameter $|B_K(\ii)|=C m^{-K}$. We neglect $C$, since it does not influence any of the later calculations. Each approximate square can be identified with the unique sequence
\begin{equation}\label{eq:201}
B_K(\ii) = (i_1,\ldots,i_{L(K)}, \ih_{L(K)+1}, \ldots, \ih_K),
\end{equation}
where $i_1,\ldots,i_{L(K)}\in\{1,\ldots,N\}$ and $\ih_{L(K)+1}, \ldots, \ih_K\in\{1,\ldots,M\}$. Hence, denoting the set of level $K$ approximate squares by $\mathcal{B}_K$, we have that
\begin{equation*}
\# \mathcal{B}_K = N^{L(K)}\cdot M^{K-L(K)} \stackrel{\eqref{eq:103}}{=} m^{K\dim_{\mathrm B}\Lambda}.
\end{equation*}
Moreover, the number of level $K$ cylinder sets within $B_K(\ii)$ is
\begin{equation}\label{eq:204}
\# B_K(\ii) := \#\big\{ \Lambda_{K}(\jj):\, \jj|L(K) = \ii|L(K) \,\text{ and }\, \Phi(\jj|K)=\Phi(\ii|K) \big\} = \prod_{\ell=L(K)+1}^K\!\! N_{\phi(i_{\ell})}.
\end{equation}

\subsection{Covering strategy}\label{sec:22}
Up to a multiplicative constant $C$, the cost of a cover using only level $K$ approximate squares is
\begin{equation*}
\sum_{\mathcal{B}_K} |B_K(\ii)|^s = \# \mathcal{B}_K \cdot m^{-Ks} = m^{-K(s-\dim_{\mathrm B}\Lambda)},
\end{equation*}
which tends to infinity for any $s<\dim_{\mathrm B}\Lambda$. Thus, at least two scales are required to get a finite sum with an exponent $s<\dim_{\mathrm B}\Lambda$. In our covering strategy, we start from $\mathcal{B}_K$ and decide for each $B_K(\ii)\in\mathcal{B}_K$ if it is more `cost efficient' to subdivide it into smaller approximate squares or not.

When working with $\dim_{\theta}\Lambda$, we are allowed to use scales $k=K,\ldots,\lfloor K/\theta\rfloor$, corresponding to covering sets of diameter between $\delta$ and $\delta^{1/\theta}$. We first determine the number of level $k_2$ approximate squares within an approximate square of level $k_1<k_2$. Let $\mathcal{B}_{k_2}^{k_1,\ii}$ denote the set of level $k_2$ approximate squares $B_{k_2}^{k_1,\ii}(\jj)$ within the approximate square $B_{k_1}(\ii)$.
\begin{claim}\label{claim:00}
Let $K\leq k_1<k_2\leq \lfloor K/\theta\rfloor $.
\begin{enumerate}[(i)]
\item If $\theta\in[\,\log_n m,1)$, then $\#\mathcal{B}_{k_2}^{k_1,\ii} = M^{k_2-k_1}\cdot \prod_{\ell=L(k_1)+1}^{L(k_2)}N_{\ih_{\ell}}$.
\item If $\theta\in(0,\log_n m)$ and
	\begin{enumerate}
		\item $k_2\in \big(K,K\cdot\log_m n\big]$, then $\#\mathcal{B}_{k_2}^{k_1,\ii} =M^{k_2-k_1}\cdot \prod_{\ell=L(k_1)+1}^{L(k_2)}N_{\ih_{\ell}}$;
		\item $k_2\in\big (K\cdot\log_m n,K/\theta\big]$, then $\#\mathcal{B}_{k_2}^{K,\ii} = N^{L(k_2)-K}\cdot M^{k_2-L(k_2)}\cdot \prod_{\ell=L(K)+1}^{K}N_{\ih_{\ell}}$.
	\end{enumerate} 
\end{enumerate}
\end{claim}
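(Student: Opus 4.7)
My plan is to read off $\#\mathcal{B}_{k_2}^{k_1,\ii}$ directly from the symbolic description~\eqref{eq:201} of an approximate square as a sequence $(j_1,\ldots,j_{L(k_2)},\jh_{L(k_2)+1},\ldots,\jh_{k_2})$, counting how many such sequences encode a level-$k_2$ approximate square contained in $B_{k_1}(\ii)=(i_1,\ldots,i_{L(k_1)},\ih_{L(k_1)+1},\ldots,\ih_{k_1})$. The inclusion $B_{k_2}(\jj)\subseteq B_{k_1}(\ii)$ is equivalent to the position-by-position compatibility conditions: $j_\ell=i_\ell$ for $\ell\le L(k_1)$; $\phi(j_\ell)=\ih_\ell$ for $L(k_1)<\ell\le\min\{L(k_2),k_1\}$; and, whenever $L(k_2)\le k_1$, also $\jh_\ell=\ih_\ell$ for $L(k_2)<\ell\le k_1$. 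All positions strictly beyond $k_1$ and up to $k_2$ are then unconstrained.

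The trichotomy in the claim is driven entirely by the position of $L(k_2)$ relative to $k_1$. I would first verify that in case (i) the hypothesis $\theta\ge\log_n m$ yields $k_2\le K/\theta\le K\log_m n$, so $L(k_2)=\lfloor k_2\log_n m\rfloor\le K\le k_1$; in case (ii)(a) the same bound $k_2\le K\log_m n$ is assumed outright. In case (ii)(b), however, $k_2>K\log_m n$ gives $k_2\log_n m>K$ and hence $L(k_2)\ge K$; this also explains why the claim there refers to $\mathcal{B}_{k_2}^{K,\ii}$, i.e.\ fixes $k_1=K$.

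In the first regime ($L(k_2)\le k_1$), the defining sequence of $B_{k_2}(\jj)$ is completely determined on positions $1,\ldots,L(k_1)$; on positions $L(k_1)+1,\ldots,L(k_2)$ each $j_\ell\in[N]$ must satisfy $\phi(j_\ell)=\ih_\ell$, contributing the factor $\prod_{\ell=L(k_1)+1}^{L(k_2)}N_{\ih_\ell}$; on positions $L(k_2)+1,\ldots,k_1$ the column symbol $\jh_\ell=\ih_\ell$ is forced, contributing no choices; and the free column symbols on positions $k_1+1,\ldots,k_2$ contribute $M^{k_2-k_1}$. Multiplying gives the formula of parts (i) and (ii)(a). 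In the second regime (with $k_1=K<L(k_2)$) the constraint from $B_K(\ii)$ reaches only up to position $K$, so positions $K+1,\ldots,L(k_2)$ are free in $[N]$ (factor $N^{L(k_2)-K}$), positions $L(k_2)+1,\ldots,k_2$ are free in $[M]$ (factor $M^{k_2-L(k_2)}$), and together with the constrained factor $\prod_{\ell=L(K)+1}^{K}N_{\ih_\ell}$ coming from positions $L(K)+1,\ldots,K$ this yields the formula in part (ii)(b).

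The main obstacle is purely bookkeeping: one must carefully separate the $N$-indexed ``row'' positions from the $M$-indexed ``column'' positions in~\eqref{eq:201} and check that the two regimes do not overlap except in the boundary case $L(k_2)=K$, where both formulas collapse to $M^{k_2-K}\prod_{\ell=L(K)+1}^{K}N_{\ih_\ell}$ and so agree. Once the position inequalities between $L(k_1),L(k_2),k_1,k_2$ are pinned down, the counting itself is a routine product-rule argument and requires no further ingredients.
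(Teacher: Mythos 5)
Your argument is correct and matches the paper's proof essentially verbatim: both identify the trichotomy via the position of $L(k_2)$ relative to $k_1$ (using $\theta\ge\log_n m\iff L(k)\le K$ on the relevant range), then count by comparing the defining sequences of $B_{k_1}(\ii)$ and $B_{k_2}(\jj)$ position by position. The only difference is that you also verify the boundary consistency at $L(k_2)=K$ and spell out case (ii)(b), which the paper disposes of with ``analogously.''
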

\begin{proof}
Observe that
\begin{equation*}
\theta\in\big[\log_n m,1\big) \;\Longleftrightarrow\; L(k)\leq K \text{ for all } k=K,\ldots, \lfloor K/\theta \rfloor.
\end{equation*}
In particular, $L(k_2)\leq k_1$. Let us compare the sequences that define $B_{k_1}(\ii)$ and $B_{k_2}^{k_1,\ii}(\jj)$:
$$
\begin{array}{ccc|ccc|ccc|ccc}
i_1 & \cdots & i_{L(k_1)} & \ih_{L(k_1)+1} & \cdots & \ih_{L(k_2)} & \ih_{L(k_2)+1} & \cdots & \ih_{k_1} &&& \\
j_1 & \cdots & j_{L(k_1)} & j_{L(k_1)+1} & \cdots & j_{L(k_2)} & \jh_{L(k_2)+1} & \cdots & \jh_{k_1} & \jh_{k_1+1} & \cdots & \jh_{k_2}.
\end{array}
$$
For the first $L(k_1)$ indices $i_\ell = j_\ell$. For indices $\ell=L(k_1)+1,\ldots,L(k_2)$, we require that $\phi(j_\ell) = \ih_{\ell}$, hence the term $\prod_{\ell=L(k_1)+1}^{L(k_2)}N_{\ih_{\ell}}$. For indices $\ell=L(k_2)+1,\ldots,k_1$ there is equality again, $\ih_{\ell}=\jh_{\ell}$. Finally, there is no restriction on $\jh_{k_1+1},\ldots,\jh_{k_2}$, hence the term $M^{k_2-k_1}$.

In case $(ii/a)$ it is also true that $L(k_2)\leq k_1$. As a result, the same formula holds.

Case $(ii/b)$ can be analyzed analogously to get the formula. 
\end{proof}

We say that it is \emph{more cost efficient} to subdivide $B_{k_1}(\ii)$ into level $k_2$ approximate squares $B_{k_2}^{k_1,\ii}(\jj)$ if and only if
\begin{equation*}
m^{-k_1s}=|B_{k_1}(\ii)|^s \geq \sum_{\mathcal{B}_{k_2}^{k_1,\ii}} \big|B_{k_2}^{k_1,\ii}(\jj)\big|^s = \#\mathcal{B}_{k_2}^{k_1,\ii} \cdot m^{-k_2s}.
\end{equation*}
In particular, when $\theta\in\big[\log_n m,1\big)$, we get after algebraic manipulations that it is more cost efficient to subdivide if and only if
\begin{equation*}
s\geq \frac{\log M}{\log m} +\frac{1}{\log n} \Bigg( \frac{1}{L(k_2)-L(k_1)} \sum_{\ell=L(k_1)+1}^{L(k_2)} \log N_{\ih_{\ell}} \Bigg). 
\end{equation*}
Moreover, at the same time, we want to be able to choose $s<\dim_{\mathrm B}\Lambda = \frac{\log M}{\log m}+\frac{\log(N/M)}{\log n}$. Thus, we will subdivide if and only if
\begin{equation}\label{eq:200}
\log(N/M) > \frac{1}{L(k_2)-L(k_1)} \sum_{\ell=L(k_1)+1}^{L(k_2)} \log N_{\ih_{\ell}}.
\end{equation}
In other words, only indices $\ih_{L(k_1)+1},\ldots,\ih_{L(k_2)}$ determine whether $B_{k_1}(\ii)$ gets subdivided into level $k_2$ approximate squares or not.
\begin{remark}
At this point, one can start to appreciate the difficulty of finding an explicit formula for $\dim_{\theta}\Lambda$. Clearly, in an optimal covering strategy, different scales are present and tracking the optimal place to subdivide individual approximate squares seems hard to deal with. In the proof of Theoram~\ref{thm:00}, we use the two extreme scales $K$ and $\lfloor K/\theta\rfloor$.
\end{remark}

\subsection{General scheme for lower bound}\label{sec:23}

The aim is to define a family of probability measures supported on $\Lambda$ which satisfy the following variant of the mass distribution principle due to Falconer--Fraser--Kempton~\cite{FFK2019}.
\begin{prop}[{\cite[Proposition 2.2]{FFK2019}}]\label{prop:20}
Let $F\subset \R^2$ be a Borel set and let $0\leq \theta\leq 1$ and $s\geq 0$. Suppose that there are numbers $c,\delta_0>0$ such that for all $0<\delta\leq \delta_0$ there exists a Borel measure $\nu_{\delta}$ supported by $F$ with $\nu_{\delta}(F)>0$, and
\begin{equation*}
\nu_{\delta}(U)\leq c |U|^s \;\text{ for all Borel sets } U\subset \R^2 \text{ with } \delta^{1/\theta}\leq |U|\leq \delta.
\end{equation*} 
Then $\underline{\dim}_{\theta}F\geq s$.
\end{prop}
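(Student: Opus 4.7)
The plan is to adapt the classical mass distribution principle to the two-sided diameter constraint $\delta^{1/\theta}\le|U_i|\le\delta$ that appears in the definition of $\underline{\dim}_\theta F$. I would argue by contradiction directly from Definition~\ref{def:01}: to establish $\underline{\dim}_\theta F\ge s$, it suffices to fix an arbitrary $s'<s$ and show that $s'$ \emph{fails} the defining property, that is, to exhibit numbers $\varepsilon>0$ and $\delta_1>0$ such that no $\delta\in(0,\delta_1]$ and no admissible cover $\{U_i\}$ of $F$ with $\delta^{1/\theta}\le|U_i|\le\delta$ satisfy $\sum_i|U_i|^{s'}\le\varepsilon$.

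The key estimate combines three elementary ingredients. First, since the $U_i$ cover $\supp\nu_\delta\subseteq F$ and the hypothesis $\nu_\delta(U)\le c|U|^s$ applies to every $U_i$ (their diameters lie in the required window),
\[
\nu_\delta(F)\le\sum_i\nu_\delta(U_i)\le c\sum_i|U_i|^s.
\]
Second, because $|U_i|\le\delta$ and $s-s'>0$, one has $|U_i|^s=|U_i|^{s'}\cdot|U_i|^{s-s'}\le\delta^{s-s'}|U_i|^{s'}$. Chaining these gives
\[
\sum_i|U_i|^{s'}\ge\frac{\nu_\delta(F)}{c\,\delta^{s-s'}},
\]
and the right hand side blows up as $\delta\to0$ provided $\nu_\delta(F)$ has a positive lower bound uniform in $\delta\le\delta_0$. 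This uniformity is the natural reading of the hypothesis $\nu_\delta(F)>0$; equivalently one may normalise each $\nu_\delta$ to a probability measure and absorb the normalising factor into $c$.

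To close the argument I would set $\varepsilon:=1$ and shrink $\delta_1\in(0,\delta_0]$ so that $c\,\delta_1^{s-s'}<\inf_{\delta\le\delta_1}\nu_\delta(F)$; then for every admissible $\delta\le\delta_1$ and every admissible cover the bound above gives $\sum_i|U_i|^{s'}>1=\varepsilon$, which contradicts $s'$ lying in the infimum set of Definition~\ref{def:01}. Letting $s'\uparrow s$ concludes the proof. I do not anticipate any substantive obstacle: the argument is short and the only points requiring care are the quantifier order in the definition of $\underline{\dim}_\theta F$ (the flexibility to choose $\delta$ freely below $\delta_0$ is what makes the contradiction work) and the implicit uniform positivity of $\nu_\delta(F)$.
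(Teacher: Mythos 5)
The paper does not prove this proposition---it is quoted verbatim from Falconer--Fraser--Kempton \cite{FFK2019}, so there is no in-house argument to compare against. Your proof is the standard mass-distribution argument adapted to the two-sided scale constraint, and it is correct, with one caveat that you yourself flag: as literally written, the hypothesis ``$\nu_\delta(F)>0$'' is not enough. If $\nu_\delta(F)\to 0$ fast enough as $\delta\to 0$ (e.g.\ $F$ a single point, $\nu_\delta=\delta^{s/\theta}\delta_{x_0}$, $c=1$), the hypotheses hold but the conclusion fails, since a singleton has $\underline{\dim}_\theta F=0$. So a uniform lower bound $\nu_\delta(F)\geq a>0$ for $\delta\leq\delta_0$ is genuinely needed (and is what \cite{FFK2019} impose); your remark that this is ``the natural reading'' is exactly right, though the alternative phrasing ``normalise and absorb into $c$'' is a little circular, since the normalising factor $1/\nu_\delta(F)$ is bounded only under that same uniformity.

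One small simplification: the factor $\delta^{s-s'}$ is not actually needed. Once one has the uniform bound $\nu_\delta(F)\geq a$, the chain
\[
\sum_i |U_i|^{s'}\;\geq\;\sum_i |U_i|^{s}\;\geq\;\frac{\nu_\delta(F)}{c}\;\geq\;\frac{a}{c}
\]
(using $|U_i|\leq\delta\leq 1$ for the first inequality) already shows that no cover at any admissible $\delta\leq\delta_0$ can have $s'$-cost below $a/(2c)$, so $s'$ lies outside the infimum set for every $s'<s$ and hence $\underline{\dim}_\theta F\geq s$. In fact this argument excludes $s$ itself from the infimum set. Your version, with the extra $\delta^{-(s-s')}$ divergence, is harmless and gives a marginally stronger statement about how the cost blows up, but it makes the reader track a second small parameter $\delta_1$ for no real gain. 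The quantifier-order observation you emphasize---that one must defeat \emph{every} $\delta\leq\delta_0$, not merely some sequence---is indeed the crux, and you handle it correctly.
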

Fix $K>0$. To define the probability measure $\nu_K$, let us fix two probability vectors
\begin{equation*}
\mathbf{p}=(p_1,\ldots,p_N) \;\text{ and }\; \mathbf{q}=(q_1,\ldots,q_N)
\end{equation*}
which satisfy the following two properties:
\begin{align}
&H(\mathbf{p})>H(\mathbf{q}) \text{ and} \label{eq:202}\\
&\text{if } \phi(k)=\phi(\ell) \text{ for } k\neq \ell, \text{ then } p_k=p_{\ell} \text{ and } q_k=q_{\ell}. \label{eq:203}
\end{align}
The idea is to use a product measure where we use $\mathbf{p}$ until a certain level depending on $K$ and then switch to $\mathbf{q}$. The vector $\mathbf{p}$ distributes mass between the maps, while $\mathbf{q}$ really distributes mass between the columns but is re-scaled within columns evenly to get a distribution on the maps. We define $\nu_K$ on the cylinder sets $C_k(\ii)$, for $k\geq K$ to be
\begin{equation*}\label{eq:205}
\nu_K(C_k(\ii)):= \prod_{\ell=1}^{L(K)} p_{i_{\ell}} \cdot \prod_{\ell=L(K)+1}^k q_{i_{\ell}}.
\end{equation*}   
It is immediate that $\sum_{|\iiv|=k} \nu_K(C_k(\iiv)) =1$ for every $k\geq K$. Moreover, since $L(K)$ is fixed, $\nu_K$ uniquely extends to $\Lambda$ with $\nu_K(\Lambda)=1$.

Let $\mathbf{c}$ be a vector with non-negative entries and $\mathbf{p}$ be a probability vector of the same dimension as $\mathbf{c}$. For the geometric mean of $\mathbf{c}$ weighted by $\mathbf{p}$, we use the notation
\begin{equation*}
\langle\mathbf{c}\rangle_{\mathbf{p}} := \prod_i c_i^{p_i}.
\end{equation*} 
In particular, $H(\mathbf{p})=-\log \langle\mathbf{p}\rangle_{\mathbf{p}}$. Recall, $\mathbf{N}=(N_{\phi(1)},\ldots,N_{\phi(N)})$.

\begin{prop}\label{prop:21}
Let $\Lambda$ be a Bedford--McMullen carpet with non-uniform vertical fibres. Assume that $\mathbf{p}$ and $\mathbf{q}$ satisfy~\eqref{eq:202} and \eqref{eq:203}. Then for every $0\leq \theta\leq 1$
\begin{equation}\label{eq:206}
\underline{\dim}_{\theta} \Lambda \geq \frac{H(\mathbf{q})}{\log m} - \left(1-\frac{\log m}{\log n}\right)\frac{\log \langle\mathbf{N}\rangle_{\mathbf{q}}}{\log m} + \theta\, \frac{H(\mathbf{p})-H(\mathbf{q})}{\log n}\,.
\end{equation}
\end{prop}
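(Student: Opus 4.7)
The plan is to apply the mass distribution principle (Proposition~\ref{prop:20}) to a suitable restriction of $\nu_K$. Given a small $\delta>0$, I would choose $K=K(\delta)$ with $m^{-K}\leq \delta<m^{-(K-1)}$, so that any Borel $U$ with $\delta^{1/\theta}\le |U|\le \delta$ has diameter comparable to $m^{-k}$ for some $k\in[K,\lfloor K/\theta\rfloor+O(1)]$ and is covered by only $O(1)$ level-$k$ approximate squares. The task therefore reduces to bounding $\nu_K(B_k(\ii))$ uniformly in $\ii$ and in $k$ throughout this range.

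The key computation is the decomposition
\[
\nu_K(B_k(\ii))=\prod_{\ell=1}^{L(K)} p_{i_\ell}\cdot\prod_{\ell=L(K)+1}^{L(k)} q_{i_\ell}\cdot\prod_{\ell=L(k)+1}^{k} N_{\phi(i_\ell)}\,q_{i_\ell},
\]
which follows from $L(K)\leq L(k)$ and the column-constancy assumption~\eqref{eq:203}, the last factor being the $\phi(i_\ell)$-entry of the column marginal of $\mathbf{q}$. Under the natural product law $\tilde\nu_K$ on $\Sigma$, the three $-\log$-summands have expected values $H(\mathbf{p})$, $H(\mathbf{q})$ and $H(\mathbf{q})-\log\langle\mathbf{N}\rangle_\mathbf{q}$. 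Plugging these expectations into $-\log\nu_K(B_k(\ii))/(k\log m)$, using $L(\ell)=\ell\log m/\log n+O(1)$, the monotonicity in $K/k$ guaranteed by~\eqref{eq:202} (the coefficient of $K/k$ being strictly positive), and the constraint $K/k\geq\theta$ coming from $k\leq\lfloor K/\theta\rfloor$, one obtains the RHS of~\eqref{eq:206} as the worst-case asymptotic exponent $s$, attained at $k=\lfloor K/\theta\rfloor$.

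To promote this ``expected value'' statement into a uniform upper bound on individual approximate squares, I would define a good set $\widetilde G_K^\epsilon\subset\Sigma$ consisting of those $\ii$ for which each of the three partial averages lies within $\epsilon$ of its expectation, \emph{simultaneously for every integer} $k\in[K,\lfloor K/\theta\rfloor]$. Because all entries of $\mathbf{p}$ and $\mathbf{q}$ are strictly positive, the summands $-\log p_{i_\ell}$, $-\log q_{i_\ell}$, $-\log(N_{\phi(i_\ell)}q_{i_\ell})$ are bounded, so Hoeffding's inequality yields a failure probability of $e^{-c\epsilon^2 K}$ at each scale; a union bound over the $O(K)$ admissible scales $k$ then gives $\tilde\nu_K(\widetilde G_K^\epsilon)\geq\tfrac12$ for all $K$ large. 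The decisive geometric observation is that, by the coding~\eqref{eq:201}, all three partial averages depend on $\ii$ only through $(i_1,\ldots,i_{L(k)},\phi(i_{L(k)+1}),\ldots,\phi(i_k))$, i.e.\ through the very data that defines $B_k(\ii)$; hence being good at scale $k$ is a property of the approximate square itself, and any $B_k(\ii)$ that meets $\widetilde G_K^\epsilon$ satisfies $\nu_K(B_k(\ii))\leq C\,m^{-k(s-\epsilon/\log m)}$.

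The proof concludes by pushing $\tilde\nu_K|_{\widetilde G_K^\epsilon}$ forward by $\Pi$ to a Borel measure $\hat\nu_K$ on $\Lambda$ with $\hat\nu_K(\Lambda)\geq\tfrac12$. For any admissible $U$, squares disjoint from $\widetilde G_K^\epsilon$ contribute nothing, while the $O(1)$ remaining squares each have $\nu_K$-mass at most a uniform constant times $|U|^{s-\epsilon/\log m}$; hence $\hat\nu_K(U)\leq C'|U|^{s-\epsilon/\log m}$. Proposition~\ref{prop:20} then gives $\underline\dim_\theta\Lambda\geq s-\epsilon/\log m$, and letting $\epsilon\to 0$ yields~\eqref{eq:206}. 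The main technical obstacle is the \emph{uniform} concentration across the polynomially many scales $k\in[K,\lfloor K/\theta\rfloor]$; once it is controlled via Hoeffding plus a union bound, everything else is bookkeeping (the $O(1)$ rounding errors from the floors in $L(\cdot)$ and the bounded number of approximate squares needed to cover $U$).
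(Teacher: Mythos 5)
Your strategy is essentially the paper's: compute $\nu_K(B_k(\ii))=\#B_k(\ii)\cdot\nu_K(C_k(\ii))$, take $-\log$ and normalise by $k\log m$, identify the worst case at $k=\lfloor K/\theta\rfloor$ via $H(\mathbf{p})>H(\mathbf{q})$, and invoke Proposition~\ref{prop:20}. The main difference is the concentration step: the paper applies the strong law of large numbers plus Egorov's theorem to produce a positive-measure set on which the three empirical averages are close to their limits, whereas you use Hoeffding plus a union bound over the $O(K)$ admissible scales to build the good set $\widetilde G_K^\epsilon$. Your route is a legitimate alternative, and it makes explicit an observation the paper leaves implicit but does rely on: each partial average depends on $\ii$ only through the data $(i_1,\ldots,i_{L(k)},\phi(i_{L(k)+1}),\ldots,\phi(i_k))$ defining $B_k(\ii)$, so goodness at scale $k$ really is a property of the approximate square, which justifies restricting $\nu_K$ to $\widetilde G_K^\epsilon$ before applying the mass distribution principle.

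There is, however, a genuine gap in the Hoeffding step as written. You regroup the product as $\prod_{\ell=1}^{L(K)} p_{i_\ell}\cdot\prod_{\ell=L(K)+1}^{L(k)} q_{i_\ell}\cdot\prod_{\ell=L(k)+1}^{k} N_{\phi(i_\ell)} q_{i_\ell}$, so your second partial average runs over $L(k)-L(K)$ indices. For $k=K$ this block is empty, and for $k$ near $K$ it has only $O(1)$ terms, so Hoeffding gives a failure probability of order $e^{-c\epsilon^2(L(k)-L(K))}$ which does \emph{not} decay in $K$; the claimed uniform $e^{-c\epsilon^2 K}$ bound and the subsequent union bound over scales therefore fail for those $k$. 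This is fixable in two ways: either keep the paper's grouping, with the middle sum over $\ell=L(K)+1,\ldots,k$, which always has $k-L(K)\geq(1-\log_n m)K+O(1)$ terms so Hoeffding does give $e^{-c'\epsilon^2 K}$ uniformly; or observe that when $L(k)-L(K)=o(K)$ the contribution of that block to $-\log\nu_K(B_k(\ii))/(k\log m)$ is $O((L(k)-L(K))/K)=o(1)$ regardless, so no concentration is needed there and Hoeffding need only be applied when $L(k)-L(K)\geq c_\epsilon K$. One further small point: conditions~\eqref{eq:202}--\eqref{eq:203} do not by themselves force $\mathbf{p},\mathbf{q}$ to have strictly positive entries; you should instead note that under $\tilde\nu_K$ the relevant summands are almost surely bounded (symbols with zero weight are never drawn), which is enough for Hoeffding.
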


\begin{proof}
The goal is to show that for all $K$ large enough, the measure $\nu_K$ satisfies the conditions of Proposition~\ref{prop:20} with the exponent $s$ claimed in~\eqref{eq:206}.

First observe that it is enough to consider the $\nu_K$ measure of approximate squares $B_k(\ii)$ for $k=K,\ldots, \lfloor K/\theta\rfloor$. Indeed, there is a uniform upper bound (depending only on $\Lambda$) on the number of level $k$ approximate squares any set $U$ can intersect, where $k$ is such that $m^{-k}\leq |U|< m^{-k+1}$. Moreover, property \eqref{eq:203} implies that the $\nu_K$ measure of all cylinder sets within an approximate square is the same. Thus,
\begin{equation*}
\nu_K(B_k(\ii)) = \#B_k(\ii) \cdot \nu_K(C_k(\ii)) \stackrel{\eqref{eq:204}}{=} \prod_{\ell=1}^{L(K)} p_{i_{\ell}} \cdot \prod_{\ell=L(K)+1}^k q_{i_{\ell}} \cdot \prod_{\ell=L(k)+1}^k N_{\phi(i_{\ell})}.
\end{equation*}
Taking logarithm of each side and dividing by $-k\log m$, we get
\begin{align*}
\frac{\log \nu_K(B_k(\ii))}{-k\log m} &= \frac{1}{\log n}\frac{K}{k} \bigg( \frac{-1}{L(K)}\sum_{\ell=1}^{L(K)}\log p_{i_{\ell}} \bigg) \\
&+ \frac{1}{\log m}\bigg( 1-\frac{\log m}{\log n}\frac{K}{k} \bigg) \bigg( \frac{-1}{k-L(K)}\sum_{\ell=L(K)+1}^k\log q_{i_{\ell}} \bigg) \\
&-\frac{1}{\log m} \bigg( 1-\frac{\log m}{\log n} \bigg) \bigg( \frac{1}{k-L(k)} \sum_{\ell=L(k)+1}^k \log N_{\phi(i_{\ell})} \bigg).
\end{align*}
Observe that as $K\to\infty$, all of $L(K), k-L(K)$ and $k-L(k)$ tend to~$\infty$ since $k\geq K$. Moreover, for $\nu_K$ typical $\ii$, the $p_{i_{\ell}}$ are distributed according to $\mathbf{p}$, while the $q_{i_{\ell}}$ and $N_{\phi(i_{\ell})}$ are distributed according to $\mathbf{q}$. Thus, the strong law of large numbers implies that for $\nu_K$ a.e. $\ii$ all three averages tend to their respective limits as $K\to\infty:$
\begin{align*}
\frac{-1}{L(K)}\sum_{\ell=1}^{L(K)}\log p_{i_{\ell}} &\to -\log \langle\mathbf{p}\rangle_{\mathbf{p}} = H(\mathbf{p}), \\
\frac{-1}{k-L(K)}\sum_{\ell=L(K)+1}^k\log q_{i_{\ell}} &\to -\log \langle\mathbf{q}\rangle_{\mathbf{q}} = H(\mathbf{q}), \\
\frac{1}{k-L(k)} \sum_{\ell=L(k)+1}^k \log N_{i_{\ell}} &\to \log \langle \mathbf{N}\rangle_{\mathbf{q}}.
\end{align*}
Egorov's theorem implies that for every $\varepsilon>0$ there exist an index $K_0$ and a subset $\Lambda_0\subset \Lambda$ such that for every $K\geq K_0$ we have that $\nu_K(\Lambda_0)>0$ and the three averages are simultaneously $\varepsilon$-close to their limits. Hence, for $\ii\in\Lambda_0$ and $K\geq K_0$
\begin{align*}
\frac{\log \nu_K(B_k(\ii))}{-k\log m} &\leq \frac{K}{k}\frac{H(\mathbf{p})-H(\mathbf{q})-2\varepsilon}{\log n} + \frac{H(\mathbf{q})-\varepsilon}{\log m} - \bigg( 1-\frac{\log m}{\log n} \bigg)\frac{\log \langle \mathbf{N}\rangle_{\mathbf{q}}+\varepsilon}{\log m}\ .
\end{align*}
We need a uniform upper bound for $\nu_K(B_k(\ii))$ for the full range $k=K,\ldots,\lfloor K/\theta \rfloor$. This is achieved exactly when
\begin{equation*}
\frac{K}{k}\frac{H(\mathbf{p})-H(\mathbf{q})-2\varepsilon}{\log n}\; \text{ is minimal} \;\stackrel{\eqref{eq:202}}{\Longleftrightarrow}\; k \text{ is maximal}.
\end{equation*}
Substituting back $k=\lfloor K/\theta \rfloor$, Proposition~\ref{prop:20} implies that
\begin{equation*}
\underline{\dim}_{\theta} \Lambda \geq \frac{H(\mathbf{q})-\varepsilon}{\log m} - \left(1-\frac{\log m}{\log n}\right)\frac{\log \langle\mathbf{N}\rangle_{\mathbf{q}}+\varepsilon}{\log m} + \theta\, \frac{H(\mathbf{p})-H(\mathbf{q})-2\varepsilon}{\log n}\,.
\end{equation*}
Taking limit $\varepsilon\to 0$ proves the assertion.
\end{proof}

\section{Proof of Theorem~\ref{thm:00}, the upper bound}\label{sec:03}

The proof goes by constructing a cover of $\Lambda$ using approximate squares of level $K$ and $\lfloor K/\theta\rfloor$, which correspond to covering sets of diameter $\delta$ and $\delta^{1/\theta}$. Recall from~\eqref{eq:107} 
\begin{equation*}
\overline{\log \mathbf{N}^M} = \frac{1}{M} \sum_{\ih=1}^{M} \log N_{\ih} < \log(N/M).
\end{equation*}
For the remainder of the proof we fix $\theta\in \big[\log_n m,1\big)$ and we choose
\begin{equation*}
\Delta_0=\Delta_0(\theta) \in (0,\log(N/M) - \overline{\log \mathbf{N}^M}),
\end{equation*}
which will be optimized at the end of the proof.
 
We start from the set $\mathcal{B}_K$ of level $K$ approximate squares and partition it into two sets:
\begin{equation}\label{eq:302}
\mathrm{Good}_K := \Bigg\{ B_K(\ii):\, \frac{1}{L(K/\theta)-L(K)} \sum_{\ell=L(K)+1}^{L(K/\theta)} \log N_{\ih_\ell} \,\leq\, \log(N/M)-\Delta_0 \Bigg\}
\end{equation}
and
\begin{equation*}
\mathrm{Bad}_K := \mathcal{B}_K \setminus \mathrm{Good}_K.
\end{equation*}
In light of condition~\eqref{eq:200}, it is more cost efficient to subdivide all $B_K(\ii)\in\mathrm{Good}_K$ into level $\lfloor K/\theta\rfloor$ approximate squares. Thus, let us define the cover
\begin{equation*}
\mathcal{U}_K:= \Bigg\{\mathrm{Bad}_K \;\;\cup\;\; \bigcup_{\mathrm{Good}_K}\; \bigcup_{\mathcal{B}_{K/\theta}^{K,\ii}} B_{K/\theta}^{K,\ii}(\jj) \Bigg\}.
\end{equation*}
Claim~\ref{claim:00} $(i)$ implies that the cost of this cover is
\begin{equation}\label{eq:404}
\sum_{\mathcal{U}_K} |U_i|^s = \# \mathrm{Bad}_K \cdot m^{-Ks} + \sum_{\mathrm{Good}_K} M^{K/\theta - K}\prod_{\ell=L(K)+1}^{L(K/\theta)} N_{\ih_\ell} \cdot m^{-s K/\theta}.
\end{equation}
The following lemmas guarantee that for properly chosen $s$, this cost can be made arbitrarily small for large enough $K$.

\begin{lemma}\label{lem:31}
For every $\theta\in \big[\log_n m,1\big),$ 
\begin{equation*}
\frac{\log \# \mathrm{Bad}_K}{K\log m} = \dim_{\mathrm B}\Lambda -\frac{I(\log(N/M)-\Delta_0)}{\log n}\bigg(\frac{1}{\theta}-1\bigg)+o(1), 
\end{equation*}
where $I(x)$ is the large deviation rate function of the random variable $X$ uniformly distributed on the set $\{\log N_1,\ldots,\log N_M\},$ recall~\eqref{eq:108}. As a result,
\begin{equation*}
\# \mathrm{Bad}_K \cdot m^{-Ks}\to 0, \text{ as } K\to\infty \;\;\Longleftrightarrow\;\; s> \dim_{\mathrm B}\Lambda -\frac{I(\log(N/M)-\Delta_0)}{\log n}\bigg(\frac{1}{\theta}-1\bigg).
\end{equation*} 
\end{lemma}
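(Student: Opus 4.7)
The plan is to factorize $\#\mathrm{Bad}_K$ according to the encoding~\eqref{eq:201} of approximate squares and then identify the count of ``bad'' sub-sequences as a Cram\'er-type large deviation probability.

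First I would note that because $\theta \geq \log_n m$ we have $L(K/\theta) \leq K$, so the indices $\ih_{L(K)+1}, \ldots, \ih_{L(K/\theta)}$ appearing in~\eqref{eq:302} are genuinely part of the word that encodes $B_K(\ii)$. Since the bad/good dichotomy depends \emph{only} on these middle column-indices, the count factorizes as
$$\#\mathrm{Bad}_K \,=\, N^{L(K)} \cdot A_K \cdot M^{K - L(K/\theta)},$$
where $A_K$ is the number of sequences $(\ih_{L(K)+1}, \ldots, \ih_{L(K/\theta)}) \in [M]^{n_K}$, with $n_K := L(K/\theta) - L(K)$, whose empirical average $\frac{1}{n_K}\sum \log N_{\ih_\ell}$ strictly exceeds $\log(N/M) - \Delta_0$.

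Next, I would interpret $A_K / M^{n_K}$ as the probability that the empirical mean of $n_K$ i.i.d.\ uniform draws from $\{\log N_1, \ldots, \log N_M\}$ exceeds $\log(N/M) - \Delta_0$. Since $\Delta_0 \in \bigl(0,\log(N/M) - \overline{\log\mathbf{N}^M}\bigr)$, this threshold strictly exceeds the mean $\overline{\log\mathbf{N}^M}$, so Cram\'er's theorem~\cite{DemboZeitouniLDP} applies and yields
$$A_K \,=\, M^{n_K}\cdot \exp\bigl(-n_K\, I(\log(N/M) - \Delta_0) + o(n_K)\bigr).$$

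Taking logarithms, dividing by $K\log m$, and using the asymptotics $L(k) = k\log_n m + O(1)$, the ``deterministic'' contribution $\frac{L(K)\log N + (K - L(K))\log M}{K\log m}$ converges, by~\eqref{eq:103}, to $\frac{\log N}{\log n} + \frac{\log M}{\log m} - \frac{\log M}{\log n} = \dim_{\mathrm B}\Lambda$, while $n_K/(K\log m) \to (1/\theta - 1)/\log n$. Combining these pieces gives the claimed asymptotic, and the equivalence for $\#\mathrm{Bad}_K \cdot m^{-Ks} \to 0$ follows immediately. The only non-trivial input is Cram\'er's theorem; a minor wrinkle is that \eqref{eq:302} uses a non-strict inequality (so the complementary set $\mathrm{Bad}_K$ involves a strict one), but continuity of $I$ at the interior point $\log(N/M) - \Delta_0$ guarantees that the open and closed half-line events share the same large-deviation rate.
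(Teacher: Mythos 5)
Your proof is correct and follows essentially the same route as the paper: you exploit the same factorization of $\#\mathrm{Bad}_K$ into a free prefix, a free suffix, and a middle block counted by identifying $A_K/M^{n_K}$ with a Cram\'er large-deviation probability for the empirical mean of i.i.d.\ uniform draws from $\{\log N_1,\ldots,\log N_M\}$. The paper's proof records the same identity (in the form $\#\mathrm{Bad}_K = m^{K\dim_{\mathrm B}\Lambda}\cdot\mathds{P}(\overline{X}>\log(N/M)-\Delta_0)$) before invoking Cram\'er's theorem, and your remark about the strict versus non-strict inequality is handled there via $\inf_{y>x}I(y)=I(x)$ by continuity and monotonicity of $I$.
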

\begin{proof}
Recall, the fact that an approximate square $B_K(\ii)\in \mathrm{Bad}_K$ depends only on the indices $\ih_{L(K)+1},\ldots,\ih_{L(K/\theta)}$. We introduce
\begin{equation*}
\mathcal{D} := \Bigg\{(\ih_{L(K)+1},\ldots,\ih_{L(K/\theta)}):\, \frac{1}{L(K/\theta)-L(K)} \sum_{\ell=L(K)+1}^{L(K/\theta)} \log N_{\ih_\ell} \,>\, \log(N/M)-\Delta_0 \Bigg\}.
\end{equation*}
Since all other indices of $B_K(\ii)$ can be chosen freely, recall \eqref{eq:201}, we get that
\begin{equation}\label{eq:400}
\# \mathrm{Bad}_K = N^{L(K)}\cdot M^{K-L(K/\theta)}\cdot \#\mathcal{D}.
\end{equation}
Let $\{I_{\ell}\}_{\ell=L(K)+1}^{L(K/\theta)}$ be independent uniformly distributed random variables on the discrete set $\{1,\ldots,M\}$ and $X_{\ell} := \log N_{I_{\ell}}$. Then $\overline{\log \mathbf{N}^M}$ is the expected value of $X_{\ell}$. Introduce
\begin{equation*}
\overline{X}:= \frac{1}{L(K/\theta)-L(K)} \sum_{\ell=L(K)+1}^{L(K/\theta)} X_\ell.
\end{equation*}
Since all $I_\ell$ are uniformly distributed, we have that
\begin{equation*}
\mathds{P}\big(\overline{X} > \log(N/M)-\Delta_0\big) = \frac{\#\mathcal{D}}{M^{L(K/\theta)-L(K)}}.
\end{equation*}
Hence, combining this with~\eqref{eq:400}, we obtain that
\begin{equation}\label{eq:401}
\# \mathrm{Bad}_K = m^{K\dim_{\mathrm B}\Lambda} \cdot \mathds{P}\big(\overline{X} > \log(N/M)-\Delta_0\big).
\end{equation}
Cram\'er's theorem \cite[Theorem 2.1.24]{DemboZeitouniLDP} implies that for any $x>\overline{\log \mathbf{N}^M}$
\begin{equation*}
\lim_{K\to\infty}\; \frac{\log \mathds{P}\big(\overline{X} > x\big)}{L(K/\theta)-L(K)}  = -\inf_{y>x} I(y).
\end{equation*}
The infimum is equal to $I(x)$, because $I$ is continuous and non-decreasing. Applying this with $x=\log(N/M)-\Delta_0$ proves the lemma after algebraic manipulations of~\eqref{eq:401}. 
\end{proof}

\begin{lemma}\label{lem:30}
For every $\theta\in \big[\log_n m,1\big)$, if $s>\dim_{\mathrm B} \Lambda - \frac{\Delta_0}{\log n}(1-\theta)$, then
\begin{equation*}
\sum_{\mathrm{Good}_K} M^{K/\theta - K}\prod_{\ell=L(K)+1}^{L(K/\theta)} N_{\ih_\ell} \cdot m^{-s K/\theta} \to 0 \;\text{ as } K\to\infty.
\end{equation*}
\end{lemma}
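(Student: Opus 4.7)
My plan is to bound the sum by separating the indices of $B_K(\ii)\in\mathrm{Good}_K$ that are free from those constrained by the Good condition \eqref{eq:302}, apply a deterministic pointwise bound coming directly from that condition, and then read off the resulting exponent.

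First I would observe that by the coding \eqref{eq:201}, the summand $M^{K/\theta-K}\prod_{\ell=L(K)+1}^{L(K/\theta)}N_{\ih_\ell}\cdot m^{-sK/\theta}$ depends only on the middle block $\ih_{L(K)+1},\ldots,\ih_{L(K/\theta)}$. The hypothesis $\theta\geq\log_n m$ ensures $L(K/\theta)\leq K$, so this really is a sub-block of the indices describing $B_K(\ii)$, while the remaining indices $i_1,\ldots,i_{L(K)}$ and $\ih_{L(K/\theta)+1},\ldots,\ih_K$ are completely free. I would therefore factor
\begin{equation*}
\sum_{\mathrm{Good}_K}\,\prod_{\ell=L(K)+1}^{L(K/\theta)}\!\!\!\!N_{\ih_\ell} \;=\; N^{L(K)}\cdot M^{K-L(K/\theta)}\cdot S_K,
\end{equation*}
where $S_K$ is the sum of $\prod_\ell N_{\ih_\ell}$ taken over all middle blocks satisfying \eqref{eq:302}.

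Next I would bound $S_K$ using the Good condition itself. Setting $n':=L(K/\theta)-L(K)$, condition \eqref{eq:302} reads $\sum_{\ell}\log N_{\ih_\ell}\leq n'\bigl(\log(N/M)-\Delta_0\bigr)$, so exponentiating gives the uniform pointwise upper bound $\prod_{\ell}N_{\ih_\ell}\leq (N/M)^{n'}e^{-n'\Delta_0}$ on every good block. Combined with the trivial upper bound $M^{n'}$ on the number of such blocks, this yields $S_K\leq N^{n'}e^{-n'\Delta_0}$. Substituting back and using that, up to a bounded multiplicative constant, $N^{L(K/\theta)}M^{K/\theta-L(K/\theta)}=m^{(K/\theta)\dim_{\mathrm B}\Lambda}$, the sum in the statement is at most
\begin{equation*}
m^{(K/\theta)(\dim_{\mathrm B}\Lambda-s)}\cdot e^{-n'\Delta_0}.
\end{equation*}

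Finally I would convert $e^{-n'\Delta_0}$ into a power of $m$. Since $n'=K(1-\theta)/\theta\cdot\log_n m+O(1)$ and $\log_n m/\log m=1/\log n$, one obtains $e^{-n'\Delta_0}=m^{-(K/\theta)(1-\theta)\Delta_0/\log n+O(1)}$. The total exponent becomes $(K/\theta)\bigl[\dim_{\mathrm B}\Lambda-s-(1-\theta)\Delta_0/\log n\bigr]+O(1)$, strictly negative precisely under the hypothesis on $s$, so the sum tends to zero as $K\to\infty$. Unlike in Lemma~\ref{lem:31}, no large-deviation input is needed here because the Good condition supplies a deterministic pointwise bound on $\prod N_{\ih_\ell}$; the only genuine obstacle is keeping the bookkeeping of exponents, and in particular the conversion between bases $e$ and $m$, consistent throughout.
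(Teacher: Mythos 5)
Your argument is correct and is essentially the paper's own proof, packaged slightly differently: the paper simply applies the pointwise bound $\prod_{\ell}N_{\ih_\ell}\leq\big((N/M)e^{-\Delta_0}\big)^{L(K/\theta)-L(K)}$ to every summand and multiplies by the crude count $\#\mathrm{Good}_K\leq\#\mathcal{B}_K=m^{K\dim_{\mathrm B}\Lambda}$, whereas you first factor out the $N^{L(K)}M^{K-L(K/\theta)}$ free indices and then bound the sum over constrained middle blocks by $M^{n'}$ times the same pointwise bound; the two computations produce the identical exponent $-K\big((s-\dim_{\mathrm B}\Lambda)/\theta+\tfrac{\Delta_0}{\log n}(1/\theta-1)\big)$, so the proofs are effectively the same.
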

\begin{proof}
For every $B_K(\ii)\in \mathrm{Good}_K$, we have the uniform upper bound 
\begin{equation*}
\prod_{\ell=L(K)+1}^{L(K/\theta)} N_{\ih_\ell} \leq \left( \frac{N}{M}\,\cdot e^{-\Delta_0} \right)^{L(K/\theta)-L(K)}.
\end{equation*}
Moreover, trivially $\# \mathrm{Good}_K \leq \#\mathcal{B}_K = N^{L(K)}M^{K-L(K)} = m^{K\dim_{\mathrm B}\Lambda}$. Thus,
\begin{align*}
\sum_{\mathrm{Good}_K} & M^{K/\theta - K}\prod_{\ell=L(K)+1}^{L(K/\theta)} N_{\ih_\ell} \cdot m^{-s K/\theta} \\
&\leq \#\mathcal{B}_K \cdot M^{K/\theta - K} \cdot \left( \frac{N}{M}\,\cdot e^{-\Delta_0} \right)^{L(K/\theta)-L(K)} \cdot m^{-s K/\theta} \\
&= m^{-K\left( (s - \dim_{\mathrm B} \Lambda) / \theta + \frac{\Delta_0}{\log n}(1/\theta-1)\right)},
\end{align*}
which tends to $0$ as $K\to\infty$ if and only if $s>\dim_{\mathrm B} \Lambda - \frac{\Delta_0}{\log n}(1-\theta)$.
\end{proof}
\begin{remark}
Lemma \ref{lem:31} shows that the bound $\# \mathrm{Good}_K<\#\mathcal{B}_K$ is essentially optimal, because $\# \mathrm{Bad}_K$ grows at an exponentially smaller rate than $\#\mathcal{B}_K$. 

The two lemmas also show that choosing $\Delta_0=0$ or $\log(N/M) - \overline{\log \mathbf{N}^M}$ would result in a bound $s>\dim_{\mathrm B}\Lambda$ for one of the parts of the cover.
\end{remark}

\begin{proof}[Proof of Theorem~\ref{thm:00}]
Fix $\theta\in \big[\log_n m,1\big)$ and let
\begin{equation*}
f_{\theta}(\Delta_0):= \Delta_0 \cdot (1-\theta) \quad\text{and}\quad g_{\theta}(\Delta_0):=I(\log(N/M)-\Delta_0)\cdot (1/\theta-1).
\end{equation*}
Lemma~\ref{lem:31} and \ref{lem:30} implies that for any $\Delta_0 \in (0,\log(N/M) - \overline{\log \mathbf{N}^M})$ if
\begin{equation}\label{eq:301}
s>\dim_{\mathrm B}\Lambda - \frac{1}{\log n}\min\left\{ f_{\theta}(\Delta_0), g_{\theta}(\Delta_0) \right\},
\end{equation}
then the cost~\eqref{eq:404} of the cover $\mathcal{U}_K$ can be made arbitrarily small.

Observe that $f_{\theta}(0)=0=g_{\theta}(\log(N/M) - \overline{\log \mathbf{N}^M})$, moreover, $f_{\theta}(\Delta_0)$ strictly increases while $g_{\theta}(\Delta_0)$ strictly decreases as $\Delta_0$ increases. Hence, there is a unique $\Delta_0(\theta)$ such that
\begin{equation}\label{eq:303}
f_{\theta}(\Delta_0(\theta)) = g_{\theta}(\Delta_0(\theta)).
\end{equation}
This is precisely condition \eqref{eq:106} in Theorem~\ref{thm:00}. It optimizes \eqref{eq:301} by making the cost of each part of the cover $\mathcal{U}_K$ to have the same order of magnitude. Since $f_{\theta}$ and $g_{\theta}$ are continuous in $\theta$, so is $\theta\mapsto\Delta_0(\theta)$. Furthermore, $\Delta_0(\theta)$ can be extended in a continuous way to be defined for $\theta=1$. Indeed, let $\theta=1-\varepsilon$, then~\eqref{eq:303} becomes
\begin{equation*}
\varepsilon\cdot \Delta_0(1-\varepsilon) = \frac{\varepsilon}{1-\varepsilon} \cdot I\big(\log(N/M)-\Delta_0(1-\varepsilon)\big).
\end{equation*} 
Hence, we define $\Delta_0(1)$ as the unique solution of $\Delta_0(1) =  I\big(\log(N/M)-\Delta_0(1)\big)$, which is clearly strictly positive. 

The conclusion of the proof goes by the definition of $\overline{\dim}_{\theta}\Lambda$, recall~\eqref{eq:100}. Fix an arbitrary $\varepsilon>0$ and $s>\dim_{\mathrm B}\Lambda-f_{\theta}(\Delta_0(\theta))/\log n$. Choose $\delta_0>0$ so small that for $K_0=K_0(\delta_0)$ defined by $m^{-K_0}\leq \delta_0<m^{-K_0+1}$, we have that
\begin{equation*}
m^{-K_0\big( s-\dim_{\mathrm B}\Lambda+f_{\theta}(\Delta_0(\theta))/\log n\big)} < \varepsilon/2.
\end{equation*}
For any $\delta<\delta_0$, we cover $\Lambda$ with $\mathcal{U}_K$, where $m^{-K}\leq \delta<m^{-K+1}$. Then $\sum_{\mathcal{U}_K}|U_i|^s<\varepsilon$ and $\delta^{1/\theta}\leq |U_i|\leq \delta$ for every $U_i\in\mathcal{U}_K$. Hence, $\overline{\dim}_{\theta}\Lambda \leq \dim_{\mathrm B} \Lambda-f_{\theta}(\Delta_0(\theta))/\log n.$ Moreover, $$\lim_{\theta\to 1}\, \frac{\mathrm{d}}{\mathrm{d} \theta} \bigg( \dim_{\mathrm B} \Lambda-\frac{f_{\theta}(\Delta_0(\theta))}{\log n}\bigg) = \frac{\Delta_0(1)}{\log n}>0.$$
\end{proof}

\section{Proof of Theorem~\ref{thm:01}, the lower bound}\label{sec:04}
	
To prove Theorem~\ref{thm:01}, we simply apply Proposition~\ref{prop:21}. Recall, it is enough to find probability vectors $\mathbf{p}=(p_1,\ldots,p_N)$ and $\mathbf{q}=(q_1,\ldots,q_N)$ which satisfy \eqref{eq:202} and \eqref{eq:203} in order to get the lower bound
\begin{equation}\label{eq:300}
\underline{\dim}_{\theta} \Lambda \geq \frac{H(\mathbf{q})}{\log m} - \left(1-\frac{\log m}{\log n}\right)\frac{\log \langle\mathbf{N}\rangle_{\mathbf{q}}}{\log m} + \theta\, \frac{H(\mathbf{p})-H(\mathbf{q})}{\log n}\,,
\end{equation}
where $\mathbf{N}=(N_{\phi(1)},\ldots,N_{\phi(N)})$. Also recall notation from~\eqref{eq:107}, \eqref{eq:104} and \eqref{eq:105}.

We claim that the choice
\begin{equation*}
\mathbf{p} = \mathbf{P}^t(\theta) = \theta^t\,\widetilde{\mathbf{p}} +(1-\theta^t)\widehat{\mathbf{p}} \;\text{ and }\; \mathbf{q} = \mathbf{Q}^t(\theta) = \theta^t\,\widetilde{\mathbf{q}}/\mathbf{N} +(1-\theta^t)\widehat{\mathbf{p}}
\end{equation*}
satisfies both \eqref{eq:202} and \eqref{eq:203}. Clearly, $\mathbf{P}^t(\theta)$ and $\mathbf{Q}^t(\theta)$ are probability vectors for every $\theta\in[0,1]$ and $t>0$. They also satisfy \eqref{eq:203}, since they are constant within each column.

\begin{claim}
$\mathbf{P}^t(\theta)$ and $\mathbf{Q}^t(\theta)$ satisfy \eqref{eq:202}, i.e. $H(\mathbf{P}^t(\theta))>H(\mathbf{Q}^t(\theta))$ for every $\theta\in(0,1]$ and $t>0$.
\end{claim}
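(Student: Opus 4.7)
The plan is to reduce the inequality $H(\mathbf{P}^t(\theta)) > H(\mathbf{Q}^t(\theta))$ to a comparison of two Kullback--Leibler divergences on $[M]$, exploiting that every vector involved is constant on each fibre $\mathcal{I}_{\jh}$.

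First I will observe that $\widetilde{\mathbf{p}}$, $\widetilde{\mathbf{q}}/\mathbf{N}$ and $\widehat{\mathbf{p}}$ are each constant on every $\mathcal{I}_{\jh}$, so the convex combinations $\mathbf{P}^t(\theta)$ and $\mathbf{Q}^t(\theta)$ inherit this property. Write $\alpha := \theta^t \in (0,1]$, set $\mathbf{n} := (N_1/N,\ldots,N_M/N)$, and define the column marginals $\mathbf{v} = (v_{\jh})_{\jh=1}^M$ and $\mathbf{w} = (w_{\jh})_{\jh=1}^M$ by $v_{\jh} := \sum_{i \in \mathcal{I}_{\jh}} \mathbf{P}^t(\theta)_i$ and $w_{\jh} := \sum_{i \in \mathcal{I}_{\jh}} \mathbf{Q}^t(\theta)_i$. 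A direct computation gives
\[
\mathbf{v} = \alpha\mathbf{n} + (1-\alpha)\widehat{\mathbf{q}}^M, \qquad \mathbf{w} = \alpha\widetilde{\mathbf{q}}^M + (1-\alpha)\widehat{\mathbf{q}}^M.
\]
Since the conditional distribution within each column is uniform on $N_{\jh}$ atoms, the chain rule for entropy yields $H(\mathbf{P}^t(\theta)) = H(\mathbf{v}) + \sum_{\jh} v_{\jh}\log N_{\jh}$, and rewriting $\log N_{\jh} = \log N + \log(N_{\jh}/N)$ collapses this to
\[
H(\mathbf{P}^t(\theta)) = \log N - D(\mathbf{v}\|\mathbf{n}), \qquad H(\mathbf{Q}^t(\theta)) = \log N - D(\mathbf{w}\|\mathbf{n}),
\]
where $D(\mathbf{x}\|\mathbf{n}) := \sum_{\jh} x_{\jh}\log(x_{\jh}/n_{\jh})$ is the Kullback--Leibler divergence. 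Thus the claim reduces to $D(\mathbf{w}\|\mathbf{n}) > D(\mathbf{v}\|\mathbf{n})$.

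To prove this, I will apply the tangent-line bound for the strictly convex function $h(y) = y\log y$. Writing $D(\mathbf{x}\|\mathbf{n}) = \sum_{\jh} n_{\jh}\, h(x_{\jh}/n_{\jh})$ and setting $t_{\jh} := v_{\jh}/n_{\jh}$, $u_{\jh} := w_{\jh}/n_{\jh}$, the inequality $h(u)-h(t) \geq h'(t)(u-t)$ combined with $\sum_{\jh} n_{\jh}(u_{\jh}-t_{\jh}) = 0$ and the identity $n_{\jh}\cdot\widetilde{q}_{\jh}^M/n_{\jh} = 1/M$ yields
\[
D(\mathbf{w}\|\mathbf{n}) - D(\mathbf{v}\|\mathbf{n}) \;\geq\; \alpha \sum_{\jh=1}^M\Big(\tfrac{1}{M} - n_{\jh}\Big)\log t_{\jh}.
\]

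The hard part is showing strict positivity of this right-hand side, and here the specific form of $\widehat{\mathbf{q}}^M$ is essential (for an arbitrary probability vector in place of $\widehat{\mathbf{p}}$ the analogous inequality can fail, as small examples confirm). Because $\gamma := \log m/\log n < 1$, the ratio $\widehat{q}_{\jh}^M/n_{\jh} = N\cdot N_{\jh}^{\gamma-1}/\sum_{\jh'} N_{\jh'}^{\gamma}$ is strictly \emph{decreasing} in $N_{\jh}$, and consequently so are $t_{\jh} = \alpha + (1-\alpha)\widehat{q}_{\jh}^M/n_{\jh}$ and $\log t_{\jh}$ (for $\alpha<1$). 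The weights $\tfrac{1}{M}-n_{\jh}$ are likewise decreasing in $N_{\jh}$ and sum to zero, so Chebyshev's sum inequality applied to these two comonotone sequences gives strict positivity, with strictness guaranteed by the non-uniform-fibres hypothesis (which prevents either sequence from being constant). This handles $\alpha\in(0,1)$. For the boundary case $\alpha=1$ the first-order bound degenerates to zero, but $\mathbf{v}=\mathbf{n}$ while $\mathbf{w}=\widetilde{\mathbf{q}}^M\neq\mathbf{n}$, so $D(\mathbf{w}\|\mathbf{n})>0=D(\mathbf{v}\|\mathbf{n})$ is immediate by Gibbs' inequality.
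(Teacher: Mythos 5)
Your proof is correct and takes a genuinely different — and more self-contained — route than the paper's. The paper argues softly: it checks $\theta=0$ (equality) and $\theta=1$ (strict inequality from $\log N > \log M + \overline{\log \mathbf{N}^M}$), then invokes continuity together with the assertion that $\mathbf{P}^t(\theta)\neq\mathbf{Q}^t(\theta)$ for $\theta\in(0,1]$ forces $H(\mathbf{P}^t(\theta))\neq H(\mathbf{Q}^t(\theta))$, so that the sign of the difference cannot flip. That implication (distinct probability vectors have distinct entropies) is not a general fact, and the paper does not supply the extra structural argument needed to justify it for this particular one-parameter family. Your argument bypasses this entirely by producing an explicit, strictly positive lower bound for $H(\mathbf{P}^t(\theta))-H(\mathbf{Q}^t(\theta))$. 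The chain of reductions is clean: the chain rule for entropy (using that the conditional law within each column is uniform) passes to the column marginals $\mathbf{v},\mathbf{w}$ on $[M]$; the identity $H=\log N - D(\cdot\,\|\,\mathbf{n})$ turns the comparison into one of KL divergences; the tangent-line bound for $h(y)=y\log y$, combined with $\sum_{\jh}(w_{\jh}-v_{\jh})=0$, gives $D(\mathbf{w}\|\mathbf{n})-D(\mathbf{v}\|\mathbf{n}) \geq \alpha\sum_{\jh}(\tfrac1M-n_{\jh})\log t_{\jh}$; and Chebyshev's sum inequality for comonotone sequences (both $\tfrac1M-n_{\jh}$ and $\log t_{\jh}$ are decreasing in $N_{\jh}$, the latter because $\gamma=\log m/\log n<1$ makes $\widehat{q}_{\jh}^M/n_{\jh}\propto N_{\jh}^{\gamma-1}$ decreasing) yields strict positivity, with strictness coming from the non-uniform fibres hypothesis. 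You also correctly treat the degenerate case $\alpha=1$ by Gibbs' inequality since the first-order bound vanishes there. A further merit of your version is that it makes explicit where each hypothesis enters: non-uniform fibres for strictness, and the precise exponent $\gamma<1$ in $\widehat{\mathbf{q}}^M$ for the comonotonicity driving Chebyshev.
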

\begin{proof}
For $\theta=0$, we have $\mathbf{P}^t(0)=\mathbf{Q}^t(0)$, thus $H(\mathbf{P}^t(0))=H(\mathbf{Q}^t(0))$. For every $\theta\in(0,1]$, $\mathbf{P}^t(\theta)\neq \mathbf{Q}^t(\theta)$, thus, $H(\mathbf{P}^t(\theta))\neq H(\mathbf{Q}^t(\theta))$. In particular,
\begin{equation}\label{eq:510}
H(\mathbf{P}^t(1)) = H(\widetilde{\mathbf{p}}) = \log N \stackrel{\eqref{eq:107}}{>} \log M + \overline{\log \mathbf{N}^M} = H(\widetilde{\mathbf{q}}/\mathbf{N}) =  H(\mathbf{Q}^t(1)).
\end{equation} 
Since $H(\mathbf{P}^t(\theta))$ and $H(\mathbf{Q}^t(\theta))$ are continuous in $\theta$ and $H(\mathbf{P}^t(\theta))$ is strictly increasing ($\widetilde{\mathbf{p}}$ is the vector with maximal entropy), the claim follows.
\end{proof}
\begin{proof}[Proof of Theorem~\ref{thm:01}]
Since $\mathbf{p}=\mathbf{P}^t(\theta)$ and $\mathbf{q}=\mathbf{Q}^t(\theta)$ satisfy \eqref{eq:202} and \eqref{eq:203}, Proposition~\ref{prop:21} implies that for every $t>0$
\begin{align*}
\underline{\dim}_{\theta} \Lambda &\geq \frac{H(\mathbf{q})}{\log m} - \bigg(1-\frac{\log m}{\log n}\bigg)\frac{\log \langle\mathbf{N}\rangle_{\mathbf{q}}}{\log m} + \theta\, \frac{H(\mathbf{p})-H(\mathbf{q})}{\log n} \pm \frac{H(\mathbf{p})}{\log n} \\
&= \frac{H(\mathbf{p})}{\log n} + \bigg(1-\frac{\log m}{\log n}\bigg) \frac{H(\mathbf{q})-\log \langle\mathbf{N}\rangle_{\mathbf{q}}}{\log m} + (\theta-1)\, \frac{H(\mathbf{p})-H(\mathbf{q})}{\log n} \\
&= \dim^t(\theta) - (1-\theta) \frac{H(\mathbf{P}^t(\theta))-H(\mathbf{Q}^t(\theta))}{\log n}=\psi(t,\theta),
\end{align*}
where we used that
\begin{align*}
H(\mathbf{Q}^t(\theta)) &= - \sum_{i=1}^N Q_i^t(\theta) \log \frac{Q_i^t(\theta) N_{\phi(i)}}{N_{\phi(i)}} = - \sum_{\ih=1}^M Q_{\ih}^{M,t}(\theta) \log Q_{\ih}^{M,t}(\theta) + \sum_{i=1}^N Q_i^t(\theta) \log N_{\phi(i)} \\
&= H(\mathbf{Q}^{M,t}(\theta)) + \log \langle \mathbf{N} \rangle_{\mathbf{Q}^t(\theta).} 
\end{align*}
Thus, $\underline{\dim}_{\theta}\Lambda \geq \sup_{t>0}\, \psi(t,\theta),$ as claimed.
\end{proof}

\begin{remark}\label{rm:40}
Another possible choice would be to take
\begin{equation*}
\mathbf{p} = \widetilde{\mathbf{p}} = (1/N,\ldots,1/N) \;\text{ and }\; \mathbf{q} = \widetilde{\mathbf{q}}/\mathbf{N} = (1/(MN_{\phi(i)}))_{i=1}^N.
\end{equation*}
They clearly satisfy both \eqref{eq:202} and \eqref{eq:203}. Thus an analogous calculation yields
\begin{equation*}
\underline{\dim}_{\theta} \Lambda \geq \dim_{\mathrm B}\Lambda - (1-\theta) \frac{\log (N/M)-\overline{\log \mathbf{N}^M}}{\log n}.
\end{equation*}
Notice that the slope of this line is equal to $\frac{\partial}{\partial \theta}\psi(t,\theta)|_{\theta=1}.$ Indeed,
\begin{equation*}
\frac{\partial}{\partial \theta}\psi(t,\theta)\Big|_{\theta=1} = \big(\dim^t\big)'(1) + \frac{H(\mathbf{P}^t(1))-H(\mathbf{Q}^t(1))}{\log n} \stackrel{\eqref{eq:510}}{=} \frac{\log (N/M)-\overline{\log \mathbf{N}^M}}{\log n}.
\end{equation*}
We leave it as an exercise to calculate that $\big(\dim^t\big)'(1)=0$.
\end{remark}

\subsection{Recovering the bound of Falconer, Fraser and Kepmton}\label{sec:41}
Observe that $\mathbf{p}=\widetilde{\mathbf{p}}$ and $\mathbf{q}=\widehat{\mathbf{p}}$ also satisfy the conditions of Proposition~\ref{prop:21}. Hence, after straightforward calculations, we get that for every $\theta\in[0,1]$
\begin{equation}\label{eq:405}
\underline{\dim}_{\theta} \Lambda \geq \dim_{\mathrm H} \Lambda + \theta\, \frac{\log N -H(\widehat{\mathbf{p}})}{\log n}.
\end{equation}
In~\cite[Proposition 4.3]{FFK2019} a better lower bound is claimed for $\theta<\log_n m$ by dividing with $\log m$ instead of $\log n$. We claim that for $n$ large enough this contradicts Theorem~\ref{thm:00}.

Indeed, fix $m,M$ and $\mathbf{N}^M$. The limit as $\theta\to\log_n m$ of the lower bound in~\cite{FFK2019} is equal to
\begin{align}
\dim_{\mathrm H} \Lambda + \frac{\log N -H(\widehat{\mathbf{p}})}{\log n} &\stackrel{\eqref{eq:102}}{=} \frac{\log N}{\log n} + \bigg(1-\frac{\log m}{\log n}\bigg)\frac{H(\widehat{\mathbf{q}}^M)\pm \log M}{\log m} \nonumber \\
&\stackrel{\eqref{eq:103}}{=} \dim_{\mathrm B}\Lambda - \bigg(1-\frac{\log m}{\log n}\bigg)\frac{\log M-H(\widehat{\mathbf{q}}^M)}{\log m}. \label{eq:403}
\end{align}
Comparing~\eqref{eq:403} with the formula in Theorem~\ref{thm:00}, we see that there is a contradiction iff
\begin{equation}\label{eq:406}
\frac{\Delta_0(\log_n m)}{\log n} > \frac{\log M-H(\widehat{\mathbf{q}}^M)}{\log m} \;\text{ for large enough } n.
\end{equation}
Using the first order Taylor expansion of $N_{\ih}^{\log_n m}$, one can calculate that $H(\widehat{\mathbf{q}}^M)=\log M+O\big( 1/\log^2 n \big)$. Thus, the right hand side in~\eqref{eq:406} is of order $O\big( 1/\log^2 n \big)$, while the left hand is $O\big( 1/\log n \big)$ (because $\inf_n\Delta_0(\log_n m)>0$). Hence, \eqref{eq:406} holds for large enough $n$.

However, upon closer inspection of the proof in~\cite{FFK2019}, in fact one only obtains the bound with $\log m$ replaced by $\log n$, as in~\eqref{eq:405}. In return, the argument is valid for all $\theta\in[0,1]$. This resolves the contradiction. In Figures~\ref{fig:DimPlots} and~\ref{fig:DimPlotSeries}, the plot attributed to~\cite{FFK2019} is in fact~\eqref{eq:405} combined with their general lower bound that converges to $\dim_{\mathrm B}\Lambda$.

We point out that the bound in~\eqref{eq:405} is the best known for small values of $\theta$. Also observe that the calculation in~\eqref{eq:403} shows that in general $\underline{\dim}_{\theta} \Lambda$ is not convex, except perhaps for the straight line connecting $\dim_{\mathrm H}\Lambda$ with $\dim_{\mathrm B}\Lambda$.



\section{Improvements and questions}\label{sec:improvements}

Many questions still remain regarding the intermediate dimensions of Bedford--McMullen carpets. The ultimate goal of finding a precise formula for $\dim_{\theta}\Lambda$ still seems out of reach (assuming that $\underline{\dim}_{\theta} \Lambda$ and $\overline{\dim}_{\theta} \Lambda$ are in fact equal), because the following argument shows that the upper bound obtained in Theorem~\ref{thm:00} is not the best possible and it suggests that an optimal covering strategy uses several different scales. 
\begin{prop}\label{prop1}
The upper bound obtained in Theorem~\ref{thm:00} is \emph{not} optimal. One can get a better bound using three levels of approximate squares. 
\end{prop}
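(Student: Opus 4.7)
I would refine the two-scale cover of Theorem~\ref{thm:00} by inserting a third intermediate scale $K_1=\lfloor \beta K\rfloor$ with $\beta\in(1,1/\theta)$, strictly between $K$ and $K_2:=\lfloor K/\theta\rfloor$. For each $B_K(\ii)\in\mathcal{B}_K$, the three admissible covering options (keep at level $K$; subdivide to $K_1$; subdivide directly to $K_2$) have costs
\begin{equation*}
C_0 = m^{-Ks},\quad C_1 = M^{K_1-K}\!\!\prod_{\ell=L(K)+1}^{L(K_1)}\!\!N_{\ih_\ell}\cdot m^{-K_1 s},\quad C_2 = M^{K_2-K}\!\!\prod_{\ell=L(K)+1}^{L(K_2)}\!\!N_{\ih_\ell}\cdot m^{-K_2 s},
\end{equation*}
by Claim~\ref{claim:00}(i), depending on $\ii$ only through the two \emph{statistically independent} partial empirical averages
\begin{equation*}
\bar A_1 := \frac{1}{L(K_1)-L(K)}\!\!\sum_{\ell=L(K)+1}^{L(K_1)}\!\!\log N_{\ih_\ell},\qquad \bar A_2 := \frac{1}{L(K_2)-L(K_1)}\!\!\sum_{\ell=L(K_1)+1}^{L(K_2)}\!\!\log N_{\ih_\ell}.
\end{equation*}
Because $L(K_2)\leq K<K_1$, all level-$K_1$ children of a given $B_K(\ii)$ inherit the same column word $\ih_{L(K_1)+1}\ldots\ih_{L(K_2)}$ from $B_K(\ii)$; so the hierarchical strategy (first decide $B_K(\ii)\to K_1$ yes/no, then each child $\to K_2$ yes/no) collapses to picking the single best scale $\arg\min\{C_0,C_1,C_2\}$ per $B_K(\ii)$, with every mixed option dominated. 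The problem thereby reduces to partitioning the $(\bar A_1,\bar A_2)$-plane into three regions $R_0,R_1,R_2$ cut out by two affine thresholds; the intermediate region $R_1$ has non-empty interior once $s<\dim_{\mathrm B}\Lambda$.

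Cram\'er's theorem applied to the independent sums $\bar A_1$ and $\bar A_2$, together with the rate function $I$ from~\eqref{eq:108}, yields asymptotic counts of $\ii\in R_j$ and hence three exponential contributions to the cost. Direct analogues of Lemmas~\ref{lem:31} and~\ref{lem:30} show that each of these three terms vanishes precisely when $s$ exceeds an explicit quantity $s_\theta^{(3)}(\beta)$, obtained by solving a three-term balance relation that generalises~\eqref{eq:106}. The two-scale bound of Theorem~\ref{thm:00} is recovered in the degenerate limit where $R_1$ is emptied, i.e.\ when the thresholds for $\bar A_1$ and $\bar A_2$ are forced to merge into the single threshold of~\eqref{eq:302}.

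A perturbative argument then delivers strict improvement: at the two-scale critical value $s=s_\theta^{(2)}:=\dim_{\mathrm B}\Lambda-\Delta_0(\theta)(1-\theta)/\log n$, one verifies that $C_1<\min(C_0,C_2)$ on a non-empty open subset of the $(\bar A_1,\bar A_2)$-plane, so routing those approximate squares through the intermediate scale strictly reduces the dominant term of the cost. Optimising over $\beta$, strict convexity of $I$ together with the Lagrange conditions at the perturbed optimum produces some $\beta^\ast\in(1,1/\theta)$ with $s_\theta^{(3)}(\beta^\ast)<s_\theta^{(2)}$, giving the claimed improvement. The main obstacle I anticipate is precisely this last step: quantifying the gap in the large-deviation exponents so that the improvement does not vanish asymptotically, and identifying an explicit enough form for $\beta^\ast=\beta^\ast(\theta)$ to make the resulting upper bound tractable and directly comparable with $\Delta_0(\theta)$.
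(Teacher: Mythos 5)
Your conceptual route — introduce a third scale $K_1=\lfloor\beta K\rfloor$ strictly between $K$ and $K_2=\lfloor K/\theta\rfloor$, observe that the decision depends on the column indices only, and invoke large deviations for the counts — is aligned with the paper's strategy, and your key structural observation is correct and nicely put: since $L(K_2)\leq K<K_1$ (valid for $\theta\geq\log_n m$ and $\beta<1/\theta$), every level-$K_1$ child of $B_K(\ii)$ inherits the column word $\ih_{L(K_1)+1}\ldots\ih_{L(K_2)}$, so the hierarchical subdivision rule collapses to a per-square choice among three scales. Your framing as a two-dimensional large-deviation problem in $(\bar A_1,\bar A_2)$ is a sharper (per-square optimal) decision rule than what the paper uses, which thresholds only the single full average $\bar A=\frac{1}{L(K_2)-L(K)}\sum_{\ell=L(K)+1}^{L(K_2)}\log N_{\ih_\ell}$.

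However, there is a genuine gap exactly where you anticipate one, and it is the heart of the proposition. The claim that, at $s=s_\theta^{(2)}$, the inequality $C_1<\min(C_0,C_2)$ on a nonempty open set ``strictly reduces the dominant term'' does not by itself yield a strictly smaller admissible exponent: the admissible $s$ is governed by the worst of the three region exponents $\#R_j\cdot(\text{cost in }R_j)$, and re-routing a positive-probability set of squares into $R_1$ changes all three counts simultaneously; one must check that all three exponents move strictly below $s_\theta^{(2)}$, not merely that individual per-square costs drop. The paper resolves this without any Lagrangian optimisation over $\beta$: it fixes $\Delta_1\in\big(\Delta_0(\theta),\log(N/M)-\overline{\log\mathbf{N}^M}\big)$ and $\Delta_2\in\big((1-\theta)\Delta_0(\theta),\Delta_0(\theta)\big)$, thresholds the single quantity $\bar A$ at $\Delta_1,\Delta_0,\Delta_2$ to obtain four pieces, sends the borderline-bad piece to a single intermediate level $\lfloor K/\eta\rfloor$, and verifies directly that the resulting four exponents~\eqref{eq:500}--\eqref{eq:504} are each strictly less than $\dim_{\mathrm B}\Lambda-\Delta_0(\theta)(1-\theta)/\log n$, using the balance identity~\eqref{eq:303} to compare (the lower endpoint $(1-\theta)\Delta_0(\theta)$ for $\Delta_2$ is exactly what makes~\eqref{eq:504} close). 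To complete your proof you would need to supply this verification step (or your promised two-dimensional Cram\'er computation with an explicit $\beta^\ast$); as written, the existence of a strictly better $s_\theta^{(3)}(\beta^\ast)<s_\theta^{(2)}$ is asserted rather than derived.
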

\begin{proof}
As always in the paper, assume $\theta\in[\log_n m,1)$ and $\Lambda$ has non-uniform vertical fibres.
Let us start from the same partition of $\mathcal{B}_K$ into $\mathrm{Good}_K$ and $\mathrm{Bad}_K$ level $K$ approximate squares, recall~\eqref{eq:302}, with the choice of $\Delta_0=\Delta_0(\theta)$ from~\eqref{eq:106}. We improve on the bounds obtained in Lemmas~\ref{lem:31} and \ref{lem:30} by further subdividing $\mathrm{Good}_K$ and $\mathrm{Bad}_K$. Choose
\begin{equation*}
\Delta_1\in\big(\Delta_0,\log(N/M)-\overline{\log \mathbf{N}^M}\big) \quad\text{ and }\quad \Delta_2\in\big((1-\theta)\Delta_0,\Delta_0\big),
\end{equation*}
moreover, define
\begin{align}
\widetilde{\mathrm{Good}}_K\! &:= \!\Bigg\{ B_K(\ii)\in\mathrm{Good}_K \!: \frac{1}{L(K/\theta)-L(K)} \sum_{\ell=L(K)+1}^{L(K/\theta)}\!\!\!\! \log N_{\ih_\ell} \,\leq\, \log(N/M)-\Delta_1 \Bigg\}, \label{eq:506} \\
\widetilde{\mathrm{Bad}}_K &:= \!\Bigg\{ B_K(\ii)\in\mathrm{Bad}_K:\, \frac{1}{L(K/\theta)-L(K)} \sum_{\ell=L(K)+1}^{L(K/\theta)}\!\! \log N_{\ih_\ell} \,\geq\, \log(N/M)-\Delta_2 \Bigg\}. \nonumber
\end{align}

We still subdivide all $B_K(\ii)\in\mathrm{Good}_K$ into level $\lfloor K/\theta \rfloor$ approximate squares. On one hand, the same argument as in Lemma~\ref{lem:30} yields that the sum 
\begin{equation*}
\sum_{\widetilde{\mathrm{Good}}_K} M^{K/\theta - K}\prod_{\ell=L(K)+1}^{L(K/\theta)} N_{\ih_\ell} \cdot m^{-s K/\theta} \to 0 \;\text{ as } K\to\infty
\end{equation*}
if we choose
\begin{equation}\label{eq:500}
s>\dim_{\mathrm B} \Lambda - \frac{\Delta_1}{\log n}(1-\theta).
\end{equation}
On the other hand, to bound the sum
\begin{equation}\label{eq:501}
\sum_{\mathrm{Good}_K\setminus\widetilde{\mathrm{Good}}_K} M^{K/\theta - K}\prod_{\ell=L(K)+1}^{L(K/\theta)} N_{\ih_\ell} \cdot m^{-s K/\theta} 
\end{equation}
from above, we use that for every $B_K(\ii)\in \mathrm{Good}_K\setminus\widetilde{\mathrm{Good}}_K$
\begin{equation*}
\prod_{\ell=L(K)+1}^{L(K/\theta)} N_{\ih_\ell} \leq \left( \frac{N}{M} \right)^{L(K/\theta)-L(K)} \cdot e^{-\Delta_0(L(K/\theta)-L(K))},
\end{equation*}
and from Lemma~\ref{lem:31} we have that
\begin{equation*}
\# (\mathrm{Good}_K\setminus\widetilde{\mathrm{Good}}_K) \leq \# (\mathcal{B}_K\setminus\widetilde{\mathrm{Good}}_K) \leq m^{K\big(\dim_{\mathrm B}\Lambda -\frac{I(\log(N/M)-\Delta_1)}{\log n}(1/\theta-1)+o(1)\big)}.
\end{equation*}
Substituting these back into~\eqref{eq:501}, we get that the sum tends to $0$ as $K\to\infty$ if
\begin{equation}\label{eq:502}
s>\dim_{\mathrm B} \Lambda - \frac{\Delta_0+I(\log(N/M)-\Delta_1)}{\log n}(1-\theta).
\end{equation}

Now let's turn to the partition of $\mathrm{Bad}_K$. All $B_K(\ii)\in\widetilde{\mathrm{Bad}}_K$ we keep at level $K$. Then the argument of Lemma~\ref{lem:31} implies that 
\begin{equation}\label{eq:503}
\# \widetilde{\mathrm{Bad}}_K \cdot m^{-Ks}\to 0, \text{ as } K\to\infty \;\;\Longleftrightarrow\;\; s> \dim_{\mathrm B}\Lambda -\frac{I(\log(N/M)-\Delta_2)}{\log n}\bigg(\frac{1}{\theta}-1\bigg).
\end{equation}
For $B_K(\ii)\in(\mathrm{Bad}_K\setminus\widetilde{\mathrm{Bad}}_K)$ we have that
\begin{equation*}
\log(N/M)-\Delta_0 \;<\; \frac{1}{L(K/\theta)-L(K)} \sum_{\ell=L(K)+1}^{L(K/\theta)} \log N_{\ih_\ell} \;<\; \log(N/M)-\Delta_2.
\end{equation*}
There exists $\theta<\eta<1$ such that for every $B_K(\ii)\in(\mathrm{Bad}_K\setminus\widetilde{\mathrm{Bad}}_K)$
\begin{equation*}
\frac{1}{L(K/\eta)-L(K)} \sum_{\ell=L(K)+1}^{L(K/\eta)} \log N_{\ih_\ell} \;\leq\; \log(N/M)-\Delta_2.
\end{equation*} 
Hence, in light of condition \eqref{eq:200} it is more cost efficient to subdivide these approximate squares to level $\lfloor K/\eta \rfloor$ and the cost of this part of the cover is
\begin{align*}
\sum_{\mathrm{Bad}_K\setminus\widetilde{\mathrm{Bad}}_K} & M^{K/\eta - K}\prod_{\ell=L(K)+1}^{L(K/\eta)} N_{\ih_\ell} \cdot m^{-s K/\eta} \\
& \leq \# \mathrm{Bad}_K \cdot M^{K/\eta - K} \left( \frac{N}{M}\,\cdot e^{-\Delta_2} \right)^{L(K/\eta)-L(K)} \cdot m^{-s K/\eta} \\
& \leq C m^{K\big(\dim_{\mathrm B}\Lambda- \frac{I(\log(N/M)-\Delta_0)}{\log n}(1/\theta-1)\big)} m^{K\big( (1/\eta-1)\dim_{\mathrm B}\Lambda - \frac{\Delta_2}{\log n}(1/\eta-1) - s/\eta \big)}.
\end{align*}
This tends to $0$ as $K\to\infty$ if
\begin{equation}\label{eq:504}
s> \dim_{\mathrm B}\Lambda - \frac{\Delta_2}{\log n}(1-\eta) - \frac{I(\log(N/M)-\Delta_0)}{\log n}\bigg(\frac{1}{\theta}-1\bigg)\cdot \eta.
\end{equation}

The choice of $\Delta_0(\theta), \Delta_1$ and $\Delta_2$ implies that all the exponents in ~\eqref{eq:500}, \eqref{eq:502}, \eqref{eq:503}, and \eqref{eq:504} are strictly smaller than $\dim_{\mathrm B} \Lambda - \frac{\Delta_0(\theta)}{\log n}(1-\theta)$. As a result, we improved on the upper bound in Theorem~\ref{thm:00}.
\end{proof}

\begin{remark}\label{rm:1}
The proof also shows that the bound in Theorem~\ref{thm:00} is not the best even if we only use the two extreme scales: fix $\theta\in[\log_n m,1)$, choose $\Delta_2$ slightly smaller than $\Delta_0(\theta)$ and consider the same partition of $\mathcal{B}_K$ into $\mathrm{Good}_K$ and $\mathrm{Bad}_K$ with $\Delta_2$ instead of $\Delta_0(\theta)$. Then Lemma~\ref{lem:31} implies that we get a better a bound on $\# \mathrm{Bad}_K$ than in Theorem~\ref{thm:00}. Now choose $\Delta_1>\Delta_0(\theta)$ such that $\Delta_2+I(\log(N/M)-\Delta_1)>\Delta_0(\theta)$. Partition $\mathrm{Good}_K$ further into $\widetilde{\mathrm{Good}}_K$ and its compliment as in~\eqref{eq:506}. Then the exponents in~\eqref{eq:500} and~\eqref{eq:502} show that the cost of the $\mathrm{Good}_K$ part of the cover is also better than in Theorem~\ref{thm:00}.
\end{remark}

\subsection*{Questions}
In light of Remark~\ref{rm:1}, it is natural to ask that given $\mathrm{Good}_K$ as in~\eqref{eq:302} with $\Delta_0\in\big(0,\log(N/M)-\overline{\log \mathbf{N}^M}\big)$, what is the infimum of exponents $s$ such that
\begin{equation*}
\sum_{\mathrm{Good}_K} M^{K/\theta - K}\prod_{\ell=L(K)+1}^{L(K/\theta)} N_{\ih_\ell} \cdot m^{-s K/\theta} \to 0 \;\text{ as } K\to\infty?
\end{equation*}

We think that the upper bound is closer to the real value of $\dim_{\theta}\Lambda$ than the lower bound. So it is natural to ask, how can the argument be extended to $\theta<\log_n m$? Would it converge to $\dim_{\mathrm H}\Lambda$? Claim~\ref{claim:00} shows that the number of approximate squares within a given approximate square behaves differently for $\theta<\log_n m$, thus it is not clear what could take the place of condition~\eqref{eq:200}. Heuristically, if $\theta\in((\log_n m)^{\ell+1}, (\log_n m)^\ell)$, one could try to extend the argument to a cover in which `almost all' approximate squares are at level $\lfloor K/\theta \rfloor$ and there are some `left over' squares at levels $(\log_n m)^k$ for $k=0,1,\ldots, \ell$.  

It has already been asked whether $\dim_{\theta}\Lambda$ is strictly increasing, differentiable, or analytic~\cite[Question 2.1]{Fraser2020Survey}.
The arguments of this paper suggest a heuristic for the strictly increasing property by contradiction. Assume that there exists $\theta$ such that for every $\rho>\theta$ we have $\dim_{\theta}\Lambda<\dim_{\rho}\Lambda$, but there also exists an $\eta<\theta$ such that $\dim_{\eta}\Lambda=\dim_{\theta}\Lambda$. Take the optimal cover at $\theta$ and show that many approximate squares must be at level $\lfloor K/\theta \rfloor$. Argue that it is more cost efficient to subdivide the vast majority of these approximate squares to level $\lfloor K/\eta \rfloor$. This would improve on the exponent of this part of the cover. However, it is not clear how the exponent of the other part of the cover can be improved.

In the few examples where $\dim_{\theta}\Lambda$ is known, it is a concave function. At the end of Section~\ref{sec:41}, we saw that $\dim_{\theta}\Lambda$ is not convex in general. Further evidence also suggests that it is \emph{neither} concave in general. Figure~\ref{fig:DimPlotSeries} shows an example with $m=10=M,\,\mathbf{N}^M=\{12,2,\ldots,2\}$ and $n=12$ (left) or $n=10^5$ (right). The (green) upper bound on the left indicates that $\dim_{\theta}\Lambda$ is not concave, while as $n$ increases the (blue) lower bound of~\cite{FFK2019} approaches the straight line connecting $\dim_{\mathrm H}\Lambda$ and $\dim_{\mathrm B}\Lambda$, in-sync with~\eqref{eq:403}. Therefore, we ask, can $\dim_{\theta}\Lambda$ have phase transitions, in particular, at integer powers of $\log_n m$? Is it piecewise concave on the intervals in between phase transitions? 

\begin{figure}[H]
	\centering
	\includegraphics[width=0.98\textwidth]{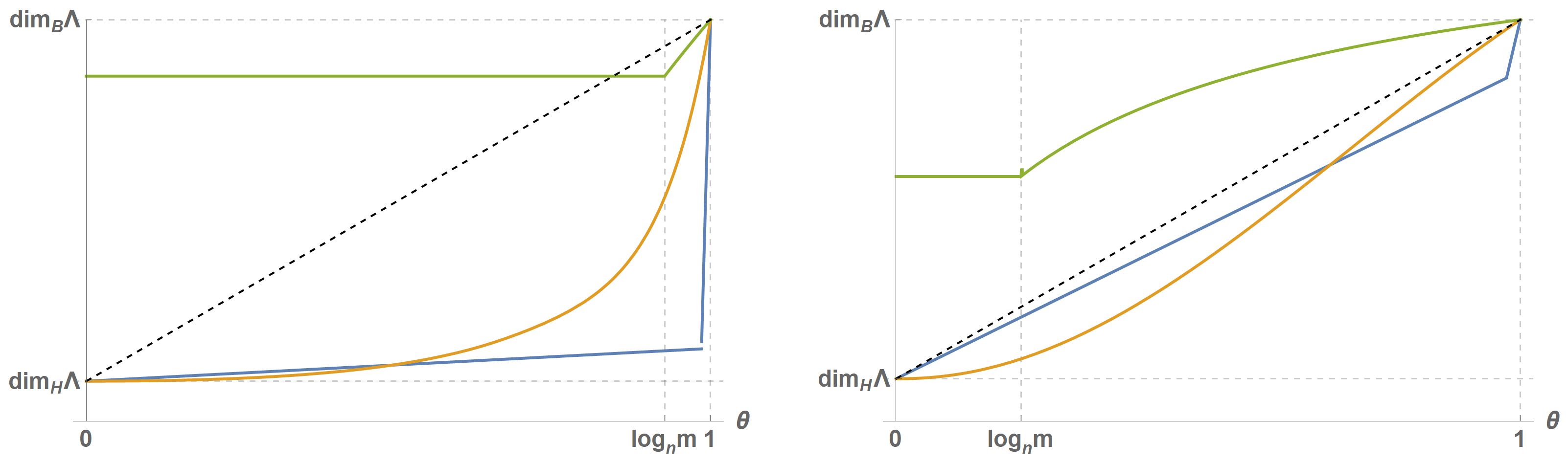}
	\caption{Example with $m=10=M,\,\mathbf{N}^M=\{12,2,\ldots,2\}$ showing that in general $\dim_{\theta}\Lambda$ is \emph{neither} concave (left, $n=12$) nor convex (right, as $n\to\infty$) for the whole range of $\theta$.}
	\label{fig:DimPlotSeries}
\end{figure} 

\subsection*{Acknowledgment}

The author was supported by a \emph{Leverhulme Trust Research Project Grant} (RPG-2019-034). The author thanks J. M. Fraser for useful discussions.
\vspace{0.2cm}

\bibliographystyle{abbrv}
\bibliography{biblio_carpet}

\end{document}